\documentclass[12pt,reqno]{amsart}
\pdfoutput=1
\usepackage{latexsym, amsmath, amssymb, bm, dsfont}
\usepackage[colorlinks=true,linkcolor=blue,
            urlcolor=blue,citecolor=blue]{hyperref}
\usepackage[shortalphabetic]{amsrefs}
\usepackage[T1]{fontenc}
\usepackage{mathptmx}
\usepackage{microtype}
\usepackage{tikz}
\usetikzlibrary{arrows,shapes,positioning,calc,patterns}
\usepackage{graphicx}
\usepackage{caption}
\usepackage{subcaption}

\usepackage[centering, includeheadfoot, hmargin=1.2in, vmargin=0.8in,
  headheight=30.4pt]{geometry}
\newtheorem{lemma}{Lemma}[section]
\newtheorem{theorem}[lemma]{Theorem}
\newtheorem{corollary}[lemma]{Corollary}
\newtheorem{proposition}[lemma]{Proposition}
\theoremstyle{definition}
\newtheorem{remark}[lemma]{Remark}
\newtheorem{definition}[lemma]{Definition}
\newtheorem{example}[lemma]{Example}
\theoremstyle{remark}
\newcommand{\define}[1]{{\bfseries\itshape #1}}
\renewcommand{\theequation}%
{\arabic{section}.\arabic{lemma}.\arabic{equation}}

\renewcommand{\geq}{\geqslant}
\renewcommand{\leq}{\leqslant}
\newcommand{\relphantom}[1]{\mathrel{\phantom{#1}}}
\newcommand{\kk}{\ensuremath{\Bbbk}} 
 
\newcommand{\NN}{\ensuremath{\mathbb{N}}} 
 
\newcommand{\QQ}{\ensuremath{\mathbb{Q}}} 
 
\newcommand{\ZZ}{\ensuremath{\mathbb{Z}}} 

\DeclareMathOperator{\reg}{reg}
\DeclareMathOperator{\Tor}{Tor}
\DeclareMathOperator{\convergent}{\textsf{\upshape c}}
\begin{document}

\title{Cones of Hilbert Functions}

\author[M.~Boij]{Mats Boij}
\address{Department of Mathematics\\ Royal Institute of Technology (KTH)\\
  Stockholm\\ 100 44\\ Sweden} \email{boij@kth.se}

\author[G.G.~Smith]{Gregory G. Smith} 
\address{Department of Mathematics and Statistics \\ Queen's University \\
  Kingston \\ Ontario\\ K7L~3N6\\ Canada}
\email{ggsmith@mast.queensu.ca}

\subjclass[2010]{13C05; 14C05, 05E40, 52A10}

\begin{abstract}
  We study the closed convex hull of various collections of Hilbert functions.
  Working over a standard graded polynomial ring with modules that are
  generated in degree zero, we describe the supporting hyperplanes and extreme
  rays for the cones generated by the Hilbert functions of all modules, all
  modules with bounded $a$-invariant, and all modules with bounded
  Castelnuovo--Mumford regularity.  The first of these cones is
  infinite-dimensional and simplicial, the second is finite-dimensional but
  neither simplicial nor polyhedral, and the third is finite-dimensional and
  simplicial.
\end{abstract}

\maketitle

\vspace*{-1em}

\section{Introduction} 
\label{sec:intro}

\noindent
Classifying modules is a universal problem in algebra.  Within commutative
algebra, the classification of graded modules bifurcates into understanding
the space of all modules with a specified Hilbert function and describing the
numerical functions that arise as the Hilbert function of some module.  As a
counterpart to multigraded Quot schemes which parametrize the modules with a
fixed Hilbert function (see \cite{HS}*{\S6.2}), this paper initiates the study
of the closed convex cones generated by the Hilbert functions of a given
collection of modules.

There are many compelling collections of modules to consider over the standard
graded ring $S := \kk[x_0,x_1, \dotsc, x_n]$ where $\kk$ is a field.  The most
naive consists of all finitely generated $\NN$-graded $S$-modules.  In this
case, every point in the corresponding closed convex cone is a unique
countable linear combination of the Hilbert functions of the $S$-modules
$S(-i)/\langle x_0, x_1, \dotsc, x_n\rangle \cong \kk(-i)$ for $i \in \NN$.
Hence, within any relevant topological vector space, the closed convex hull is
the simplicial cone generated by the Hilbert functions of these artinian
modules.  In other words, it is simply the infinite-dimensional positive
orthant.  To actually capture the subtleties of homogeneous coordinate rings,
we concentrate on collections of $\NN$-graded $S$-modules that are generated
in degree zero.  If $E \subseteq \QQ^\NN$ is a topological $\QQ$-vector space
that contains the Hilbert functions of all artinian $S$-modules generated in
degree zero and the function $h \colon \NN \to \QQ$ lies in $E$, then our
first substantive result is the following.

\begin{theorem}
  \label{thm:one}
  The closed convex hull of the Hilbert functions of $S$-modules generated in
  degree zero and contained in $E$ is the intersection of the closed
  half-spaces defined by the inequalities $(n+j+1)h(j) \geq (j+1)h(j+1)$ for
  $j \in \NN$.  The extreme rays of this simplicial cone are generated by the
  Hilbert functions of the $S$-modules $S/\langle x_0, x_1, \dotsc, x_n
  \rangle^i$ where $i \in \NN$.
\end{theorem}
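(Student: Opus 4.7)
The module $S/\langle x_0, \ldots, x_n\rangle^i$ has Hilbert function $h_i$ given by $h_i(j) = \binom{n+j}{n}$ for $j < i$ and $h_i(j) = 0$ for $j \geq i$. The binomial identity $(n+j+1)\binom{n+j}{n} = (j+1)\binom{n+j+1}{n}$ shows each $h_i$ lies in the cone cut out by the inequalities and saturates every inequality except the one at $j = i-1$. For any $h \in E$ in this cone, set $c_i := h(i-1)/\binom{n+i-1}{n} - h(i)/\binom{n+i}{n}$ for $i \geq 1$; by the same identity, the defining inequality at $j = i-1$ is equivalent to $c_i \geq 0$. A telescoping calculation yields $h(j) = \binom{n+j}{n}\sum_{i > j} c_i$, so $h = \sum_{i \geq 1} c_i h_i$ as a coordinate-wise sum with only finitely many nonzero contributions at each coordinate; by the hypothesis on $E$, the partial sums converge to $h$ in $E$. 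The $h_i$ have strictly nested supports, hence are linearly independent, so the expression is unique and the cone is simplicial with extreme rays $\QQ_{\geq 0} \cdot h_i$.

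The substantive step is showing every $h_M$ with $M$ generated in degree $0$ satisfies the defining inequalities. Writing $M = F/N$ with $F = S^{a_0}$ and using that $h_F$ saturates every inequality, the desired inequality for $h_M$ is equivalent to $(j+1)h_N(j+1) \geq (n+j+1)h_N(j)$. Because $S_1 \cdot N_j \subseteq N_{j+1}$, it reduces to the sub-lemma: $(j+1)\dim(S_1 V) \geq (n+j+1)\dim V$ for every subspace $V \subseteq S_j^{a_0}$. I would prove the sub-lemma by double induction on $(n,j)$. The base cases $n = 0$ (where $\cdot x_0$ is injective on the free module) and $j = 0$ (where $S_1 \otimes F_0 \to F_1$ is an isomorphism) both yield equality. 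For the inductive step with $n, j \geq 1$, take a general linear form $\ell \in S_1$. If $\ell$ is a non-zerodivisor on $M$, the exact sequence $0 \to M(-1) \xrightarrow{\ell} M \to M/\ell M \to 0$ gives $h_{M/\ell M}(k) = h_M(k) - h_M(k-1)$; combining the inductive hypothesis over $S/(\ell) \cong \kk[x_0, \ldots, x_{n-1}]$, namely $(n+j)h_{M/\ell M}(j) \geq (j+1)h_{M/\ell M}(j+1)$, with the inductive hypothesis at $(n, j-1)$, namely $(n+j)h_M(j-1) \geq j h_M(j)$, yields $(n+j+1)h_M(j) \geq (j+1)h_M(j+1)$ after elementary rearrangement.

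\emph{Main obstacle.} A general linear form fails to be a non-zerodivisor exactly when $M$ is artinian, so the hyperplane-section induction breaks down in that case. The fix is to reduce by top-degree truncation $M \mapsto M/M_d$ (where $d$ is the top degree of $M$) to the single inequality $\dim N_d \geq \frac{n+d}{d}\dim N_{d-1}$, and to handle this by invoking Macaulay's theorem applied to the ideal generated by $N_{d-1}$ in $S$, combined with the elementary combinatorial inequality $K^{\langle j\rangle} \leq \frac{n+j+1}{j+1}K$ for $0 \leq K \leq \binom{n+j}{n}$; the latter can be verified term by term from the Macaulay expansion.
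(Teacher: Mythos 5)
Your structural analysis of the cone has the right shape, but the telescoping step is not quite correct. Since the defining inequalities say only that $h(j)/\binom{n+j}{n}$ is weakly decreasing, the limit $L := \lim_{j\to\infty} h(j)/\binom{n+j}{n}$ is a non-negative number that need not vanish (take $h = h_S$). The telescoping sum therefore gives $h(j) = \binom{n+j}{n}\bigl(L + \sum_{i>j} c_i\bigr)$, not $\binom{n+j}{n}\sum_{i>j} c_i$; and the claim that the coordinate-wise sum has ``only finitely many nonzero contributions at each coordinate'' is wrong, since $h_i(j) \neq 0$ for every $i > j$. To recover the statement you must argue that $h_S$ (and hence $L\,h_S$) lies in the \emph{closed} convex hull of the rays because $h_i \to h_S$ in the topology of $E$, and you must treat the infinite sum $\sum_{i>j} c_i$ as a convergent series rather than a finite one. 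This is precisely why the statement is about a closed convex hull in a topological vector space and the paper phrases the representation as a ``countable linear combination.''

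The substantive step has more serious gaps. First, a general linear form fails to be a non-zerodivisor precisely when $\operatorname{depth}(M) = 0$, which is strictly weaker than $M$ being artinian (consider $M = \kk[x_0,x_1]/\langle x_0^2, x_0x_1\rangle$, which has an associated prime equal to $\mathfrak{m}$ but infinite length). Your top-degree-truncation fix requires a top degree and does not apply here, so an entire class of modules escapes your induction. Second, once $a_0 > 1$, the graded piece $N_{d-1}$ is a subspace of $S_{d-1}^{a_0}$, not of $S_{d-1}$, so ``the ideal generated by $N_{d-1}$ in $S$'' is not defined and Macaulay's theorem for ideals does not apply; you need Hulett's extension to submodules of free modules (which is exactly what the paper's algebraic proof invokes). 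Third, your claimed termwise verification of $K^{\langle j\rangle} \leq \frac{n+j+1}{j+1}K$ fails: writing $K = \sum_{k\le j}\binom{a_k}{k}$ one has $\binom{a_k+1}{k+1} = \frac{a_k+1}{k+1}\binom{a_k}{k}$, and for $k < j$ the ratio $\frac{a_k+1}{k+1}$ can exceed $\frac{n+j+1}{j+1}$ even though $a_k \le n+k$, so individual terms can violate the bound and only the sum works. The inequality is true, but it is essentially equivalent to the target inequality itself, so a circularity-free proof requires the kind of global argument the paper gives. Finally, note that once you are prepared to invoke Macaulay/Hulett for the depth-$0$ case anyway, the hyperplane-section induction is superfluous: the paper's algebraic proof applies Hulett directly (with a clean ``tensor with $\kk^{(k)}$ to make $h_M(j)$ a multiple of $h_S(j)$'' trick), and its combinatorial proof sidesteps Macaulay entirely by Gr\"obner-degenerating to a monomial submodule and exhibiting an explicit injection between sets whose cardinalities are the two sides of the inequality. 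Either of those routes is substantially shorter and closes the gaps above.
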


By design, our approach overcomes limitations in Macaulay's celebrated theorem
on Hilbert functions.  Although the main theorem in \cite{Macaulay} determines
those numerical functions which occur as Hilbert functions of a homogeneous
quotient of $S$, the complexity of this result, as underscored in
\cite{BFS}*{p.~27} and \cite{Brenti}*{p.~132}, makes it unwieldy.  The optimal
linear conditions are frequently more useful despite not providing a complete
characterization.  Moreover, because Macaulay's Theorem depends inherently on
lex-segment ideals, it cannot be extended to graded rings that do not have
analogous ideals.  Closed convex hulls enjoy no such restrictions.  These two
features, in addition to the advantages of endowing the set of Hilbert
functions with a geometric structure, motivate our interest in cones of
Hilbert functions.  In particular, we regard the supporting hyperplanes in
Theorem~\ref{thm:one} (see Theorem~\ref{thm:Macaulay}) as the linearization of
Macaulay's Theorem.

To reveal the properties related to Hilbert polynomials, we need a smaller
collection of modules---one that does not contain artinian modules of
arbitrary length.  Requiring that the Hilbert polynomial and Hilbert function
agree for all integers greater than a fixed number $a$ is a straightforward
method of making such a collection.  Equivalently, we restrict to the
finite-dimensional $\QQ$-vector space $V_{n,a} \subset \QQ^\NN$ consisting of
all functions $h \colon \NN \to \QQ$ satisfying $\sum_{j \in \NN} h(j) t^j =
(b_0 + b_1 t + \dotsb + b_{a+n} t^{a+n})/(1-t)^{n}$ for some $b_0, b_1,
\dotsc, b_{a+n} \in \QQ$.  In this context, the primary object of interest is
the closed convex hull $Q_{n,a} \subset V_{n,a}$ of the Hilbert functions of
finitely generated $\NN$-graded $S$-modules that are generated in degree zero
and have no free summands.  Our second major result characterizes this cone.

\begin{theorem}
  \label{thm:two}
  If $T \colon \QQ^\NN \to \QQ^\NN$ is the linear operator defined by 
  \[
  \bigl( T[h] \big)(j) := (n+j+1)h(j) - (j+1) h(j+1) 
  \]
  where $h \colon \NN \to \QQ$, then the image $T[Q_{n,a}]$ equals the closed
  convex hull of $\NN^\NN \cap V_{n,a}$.
\end{theorem}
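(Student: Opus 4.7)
The plan is to establish the two inclusions $T[Q_{n,a}] \subseteq \overline{\mathrm{conv}}(\NN^\NN \cap V_{n,a})$ and its reverse separately. The forward direction rests on three ingredients. First, a direct generating-function computation shows that $T$ sends a series $p(t)/(1-t)^n$ to $[p(t) - (1-t) p'(t)]/(1-t)^n$, which preserves the degree bound and hence the subspace $V_{n,a}$. Second, $T$ acts with integer coefficients in $j$, so it preserves integer-valuedness. Third, Theorem~\ref{thm:one} is precisely the statement that the image of any Hilbert function of a module generated in degree zero satisfies $T[h_M](j) \geq 0$ for every $j$. Together these verify that $T$ sends each generator of $Q_{n,a}$ into $\NN^\NN \cap V_{n,a}$, and the forward inclusion follows by closure and convexity.

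For the reverse inclusion, I first observe that $T$ restricts to a $\QQ$-linear bijection on the finite-dimensional space $V_{n,a}$: its kernel in $\QQ^\NN$ is spanned by the Hilbert function of $S$ itself, $j \mapsto \binom{n+j}{n}$, whose generating function $1/(1-t)^{n+1}$ is not in $V_{n,a}$. Inverting the recursion $T[h](j) = (n+j+1) h(j) - (j+1) h(j+1)$ under the decay condition imposed by $h \in V_{n,a}$ yields
\[
T^{-1}[g](j) = \binom{n+j}{n} \sum_{i \geq j} \frac{g(i)}{(n+1)\binom{n+i+1}{n+1}},
\]
which can be rewritten as $T^{-1}[g] = \sum_{i \in \NN} \bigl(g(i)/[(n+1)\binom{n+i+1}{n+1}]\bigr)\, h_{S/\langle x_0, \ldots, x_n \rangle^{i+1}}$. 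When $g$ is supported in $\{0, 1, \ldots, a\}$ the sum is finite, each summand is a Hilbert function whose series lies in $V_{n,a}$, and $T^{-1}[g] \in Q_{n,a}$ follows directly.

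The main obstacle is the general case, in which $g$ has nonzero values beyond degree $a$, making the formal sum infinite and involving Hilbert functions of $S/\mathfrak{m}^{i+1}$ for $i > a$ that fall outside $V_{n,a}$, even though the whole sum remains there. The strategy is to replace this expansion by a finite positive combination of Hilbert functions of quotients $S/\mathfrak{m}_k^{\,i}$ where $\mathfrak{m}_k := \langle x_0, \ldots, x_k \rangle$ and $k < n$, exploiting the identity
\[
T[h_{S/\mathfrak{m}_k^{\,i}}] = (k+i)\binom{k+i-1}{k}\, h_{\kk[x_{k+1}, \ldots, x_n](-(i-1))},
\]
which is verified by expanding $h_{S/\mathfrak{m}_k^{\,i}}$ as $P_{k,i}(t)(1-t)^k/(1-t)^n$ with $P_{k,i}(t) = \sum_{j=0}^{i-1}\binom{k+j}{k}\, t^j$ and applying the generating-function description of $T$. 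Each such $S/\mathfrak{m}_k^{\,i}$ is generated in degree zero with Krull dimension $n-k \leq n$ and hence has no free summand, so its Hilbert function lies in $Q_{n,a}$; matching the polynomial tail of $g$ by shifts of $\kk[x_{k+1}, \ldots, x_n]$ together with the atomic preimages $T^{-1}[e_s] \propto h_{S/\mathfrak{m}^{s+1}}$ for $s \leq a$ then recovers every $T^{-1}[g]$ inside $Q_{n,a}$. The technical heart of the argument is showing that nonnegativity of $g$ always translates into nonnegativity of the coefficients in this reorganized decomposition.
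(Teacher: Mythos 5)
Your forward inclusion $T[Q_{n,a}] \subseteq P_{n,a}$ is sound and mirrors the paper's argument: Lemma~\ref{lem:eigen} (or your direct generating-function calculation) shows $T$ preserves $V_{n,a}$, and Theorem~\ref{thm:Macaulay} gives nonnegativity of $T[h_M]$. Your setup for the reverse inclusion is also on the right track --- you correctly identify $T\rvert_{V_{n,a}}$ as a bijection (since $\ker T$ is spanned by $h_S$, which has generating function $(1-t)^{-(n+1)} \notin V_{n,a}$), derive the inversion formula, note the obstacle that the naive expansion in $h_{S/\mathfrak{m}^{i+1}}$ is infinite and escapes $V_{n,a}$, and you independently recover the key identity $T[h_{S/\langle x_0,\dots,x_{\ell-1}\rangle^i}] = i\binom{\ell-1+i}{i}\,t^{i-1}(1-t)^{\ell-1-n}$, which is precisely Lemma~\ref{lem:gens}.

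However, the argument stops exactly where the real work begins. Your final sentence --- ``The technical heart of the argument is showing that nonnegativity of $g$ always translates into nonnegativity of the coefficients in this reorganized decomposition'' --- names the crux and then does not prove it, and as stated the claim is too strong to attack directly. For an \emph{arbitrary} nonnegative $g \in V_{n,a}$, the polynomial tail $q_g$ need not have any special factorization, and there is no evident reason it should be a finite nonnegative combination of shifted binomial coefficients $\binom{s+k}{k}$; indeed, $P_{n,a}$ is not polyhedral, so you cannot hope to express every point as a finite nonnegative combination drawn from a fixed finite list of atoms, and even restricting to integer sequences the positivity needs a genuine argument. The paper sidesteps this by first reducing to the \emph{extreme rays} of $P_{n,a}$, classified in Proposition~\ref{pro:Pcone}: each extreme ray has a tail that factors into $p_\lambda$ and a product of consecutive shifts, i.e.\ a polynomial with distinct negative integer roots after re-centering. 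Lemma~\ref{lem:pos} then shows, by an induction using the Absorption Identity, that precisely such polynomials are nonnegative $\QQ$-combinations of the $\binom{s+k}{k}$. This special structure of the extreme rays is what makes the positivity provable; without the reduction and without something playing the role of Lemma~\ref{lem:pos}, your proposal has a gap at its decisive step.
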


\noindent
The linear operator $T$ and the supporting hyperplanes for $Q_{n,a}$ arise
from the linearization of Macaulay's Theorem.  Since
Proposition~\ref{pro:Pcone} describes the extreme rays for the image
$T[Q_{n,a}]$, we also obtain, in Corollary~\ref{cor:Qcone}, a description for
the extreme rays of $Q_{n,a}$.  As Example~\ref{exa:Q_{3,-1}} demonstrates,
the cone $Q_{n,a}$ is generally neither simplicial nor polyhedral.

Alternatively, Castelnuovo--Mumford regularity, an invariant of a module not
just its Hilbert function, provides a more sophisticated mechanism for
creating a smaller collection.  To be explicit, let $R_{n,m}$ be the closed
convex hull in $V_{n+1,m}$ of the Hilbert functions of finitely generated
$\NN$-graded $S$-modules that are generated in degree zero, have no free
summands, and have regularity at most $m$.  If $q_h \in \QQ[s]$ denotes the
Hilbert polynomial associated to $h \in R_{n,m}$ and $\nabla \colon \QQ[s] \to
\QQ[s]$ is the backward difference operator defined by $\nabla q(s) := q(s) -
q(s-1)$ for $q \in \QQ[s]$, then our third significant result describes the
cone $R_{n,m}$.

\begin{theorem}
  \label{thm:three}
  The closed convex cone $R_{n,m}$ lies in the subspace $V_{n,m} \subset
  V_{n+1,m}$ and is the intersection of the closed half-spaces given by the
  inequalities:
  \begin{xalignat*}{2}
    (n+j+1) h(j) &\geq (j+1) h(j+1) && \text{for $0 \leq j < m$,} \\
    h(m) &\geq q_h(m) & & \text{and,}\\
    (n+1-i) \nabla^i q_h (m) &\geq (n+m+1-i) \nabla^{i+1} q_h (m) && \text{for
      $0 \leq i < n$.}
  \end{xalignat*}
  The extreme rays of this simplicial polyhedral cone are generated by the
  Hilbert functions of the following cyclic modules:
  \begin{equation}
    \label{eq:cyclic}
    \begin{split}
      \frac{S}{\langle x_0, x_1, \dotsc, x_{n} \rangle}, &\frac{S}{\langle x_0,
        x_1, \dotsc, x_{n} \rangle^2}, \dotsc, \frac{S}{\langle x_0, x_1,
        \dotsc, x_{n} \rangle^{m}}, \\
      &\frac{S}{\langle x_0, x_1, \dotsc, x_{n} \rangle^{m+1}},
      \frac{S}{\langle x_0, x_1, \dotsc, x_{n-1} \rangle^{m+1}}, \dotsc,
      \frac{S}{\langle x_0 \rangle^{m+1}} \, .
    \end{split}
  \end{equation}
\end{theorem}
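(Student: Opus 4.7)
My plan is to prove the theorem in three stages: establish all the listed half-space inequalities for every $h \in R_{n,m}$; compute explicitly the Hilbert function of each listed cyclic module; and then exploit a bijection between the $m+n+1$ extreme rays and the $m+n+1$ facet inequalities to extract simpliciality, the containment $R_{n,m} \subseteq V_{n,m}$, and completeness simultaneously.

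For the forward direction, the Macaulay inequalities for $0 \leq j < m$ are immediate consequences of Theorem~\ref{thm:one} applied to any module generated in degree zero. The inequality $h(m) \geq q_h(m)$ follows from the cohomological identity $h(j) - q_h(j) = \sum_{i \geq 0} (-1)^i \dim_\kk H^i_\mathfrak{m}(M)_j$ together with Mumford's vanishing $H^i_\mathfrak{m}(M)_j = 0$ whenever $i + j > \reg(M) \leq m$; at $j = m$ only the non-negative term $H^0_\mathfrak{m}(M)_m$ survives. The $\nabla$-inequalities come from applying the Macaulay inequalities at integers $j \geq m+1$: bounded regularity forces $h(j) = q_h(j)$ in this range, so $(n+j+1) q_h(j) \geq (j+1) q_h(j+1)$ becomes a polynomial identity on $q_h$. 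Rewriting in the backward-difference basis $\{\nabla^i q_h(m)\}_{0 \leq i \leq n-1}$ and telescoping the infinite family collapses it into the stated finite ladder indexed by $0 \leq i < n$.

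For the reverse direction, I would compute the Hilbert series of each cyclic module. For $1 \leq k \leq m$, the quotient $S/\mathfrak{m}^k$ is cyclic of regularity $k-1$ with Hilbert series a polynomial of degree $k-1$. For $0 \leq j \leq n$, writing $S \cong \kk[x_{n-j+1}, \dotsc, x_n] \otimes_\kk \kk[x_0, \dotsc, x_{n-j}]$ and using that $I_j := \langle x_0, \dotsc, x_{n-j}\rangle$ is a regular sequence of linear forms yields the Hilbert series $\bigl(\sum_{k=0}^m \binom{k+n-j}{n-j} t^k\bigr)/(1-t)^j$ for $S/I_j^{m+1}$; each such module is cyclic of regularity exactly $m$ with Hilbert function in $V_{j,m} \subseteq V_{n,m}$. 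Using hockey-stick and Chu--Vandermonde identities to compute $q_h(m)$ and its iterated backward differences $\nabla^i q_h(m)$ on each ray, I would verify that each cyclic module makes all but one of the $m+n+1$ listed half-space conditions tight, producing an incidence matrix of complement-of-identity shape between rays and facets. Assembling the cyclic Hilbert functions as columns gives a matrix whose non-vanishing determinant (computable by a direct row reduction organized along the bijection) proves linear independence, hence simpliciality; inverting the same system expresses any $h$ satisfying the inequalities as a non-negative combination of the cyclic Hilbert functions, which simultaneously gives completeness and the containment $R_{n,m} \subseteq V_{n,m}$.

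The main obstacle is the translation of the infinite Macaulay chain at $j \geq m+1$ into the finite $\nabla$-ladder at $s = m$, together with the combinatorial bookkeeping that pairs each of the $m+n+1$ cyclic modules with its unique failing half-space. Both steps rely on a handful of Chu--Vandermonde-style binomial identities which, while elementary in principle, must be manipulated carefully so that the simplex incidence pattern matches exactly; once it does, the remaining assertions about simpliciality, polyhedrality, and containment in $V_{n,m}$ follow from the triangular structure of the incidence matrix with no further input.
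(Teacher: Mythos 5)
Your plan to derive the $\nabla$-inequalities from the Macaulay inequalities applied at $j \geq m+1$ cannot work, and this is a fatal gap. For a module with bounded $a$-invariant (which is all that Macaulay at $j>m$ uses), the Macaulay inequalities $(n+j+1)q_h(j) \geq (j+1)q_h(j+1)$ for $j \geq m+1$ hold but do \emph{not} imply the $\nabla$-ladder $(n+1-i)\nabla^i q_h(m) \geq (n+m+1-i)\nabla^{i+1}q_h(m)$: the latter encodes the strictly stronger constraint of bounded regularity. A minimal counterexample is $n=2$, $m=0$, $q_h(s)=as+b$. Substituting $h(j)=q_h(j)$ into the Macaulay inequalities at $j\geq 1$ yields $a(j-1)+2b\geq 0$ for all $j\geq 1$, whose cone is $\{a\geq 0, \ b\geq 0\}$; but the $\nabla$-ladder is $\{a\geq 0,\ b\geq a\}$. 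The point $(a,b)=(1,0)$ — realizable by $h(0)=2$, $h(j)=j$ for $j\geq 1$, which satisfies every Macaulay inequality — lies in $Q_{2,0}$ yet violates $b\geq a$ and is not a non-negative combination of the cyclic Hilbert functions, hence not in $R_{2,0}$. Indeed, if the $\nabla$-ladder were a consequence of Macaulay one would have $R_{n,m}=Q_{n,m}$, contradicting the paper's remark after the proof that $R_{n,m}$ is generally a proper subcone. So no amount of telescoping or change of basis can extract the $\nabla$-ladder from the Macaulay inequalities alone: the bounded-regularity hypothesis must enter the argument for these particular facets, and your plan has no mechanism for that.

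The rest of your outline is plausible and in places genuinely diverges from the paper in a useful way. Your derivation of $h(m)\geq q_h(m)$ from the Grothendieck--Serre identity $h(j)-q_h(j)=\sum_i(-1)^i\dim_\kk H^i_\mathfrak{m}(M)_j$ together with the vanishing $H^i_\mathfrak{m}(M)_j=0$ for $i+j>\reg(M)$ is correct and self-contained, whereas the paper obtains that inequality as a byproduct of its triangular coordinate computation. Your plan for the reverse direction — checking the complement-of-identity incidence pattern between the $n+m+1$ cyclic modules and the $n+m+1$ facets, and inverting the resulting triangular system — is essentially the paper's computation in the basis $1,t,\dotsc,t^m, (1-t)^{-1}t^{m+1},\dotsc,(1-t)^{-n-1}t^{m+1}$. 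What you have no substitute for is the paper's central step: showing that every $h\in R_{n,m}$ is a non-negative combination of the cyclic Hilbert functions in the first place. The paper accomplishes this via the map $\Psi$ on Betti tables, the Boij--S\"oderberg decomposition into pure tables, and the Eisenbud--Goto linear-resolution argument for truncations; it is precisely this decomposition that makes all $n+m+1$ inequalities, including the $\nabla$-ladder, come out automatically. You would need either to adopt a comparable Betti-table argument, or to find a genuinely new proof of the $\nabla$-inequalities that actually uses $\reg(M)\leq m$ and not merely the eventual polynomiality of $h$.
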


\noindent
To prove this theorem, we use the natural projection from the cone of Betti
tables.  It is intriguing that the extreme rays of $R_{n,m}$ correspond to
modules with linear free resolutions, arguably the simplest pure Betti tables.

Our progress in describing cones of Hilbert functions points in several
promising directions.  For instance, how does one describe the closed convex
hull for other important collections of $S$-modules.  Since convex cones are
closed under linear combinations with positive coefficients and the Hilbert
function of a direct sum is the sum of the Hilbert functions, collections of
modules that are closed under finite direct sums are likely the most
pertinent.  In contrast, we would also like to generalize Macaulay's Theorem
to other rings by describing the closed convex hull of the Hilbert functions
of all module generated in degree zero.  Following \cite{GHP}, toric rings are
the most prominent candidates among $\NN$-graded commutative rings.  More
generally, what is the analogue of Theorem~\ref{thm:one} when $S$ is replaced
by the homogeneous coordinate ring of a projective variety and how do the
supporting hyperplanes and extreme rays reflect the geometry of the underlying
variety.  Considering non-standard and multigraded polynomial rings branches
onto a somewhat different track as \cite{BM}, \cite{BNV}, and \cite{KMU}
establish.  Preliminary work for a standard bigraded polynomial ring, or
equivalently the Cox ring for a product of projective spaces, indicates that
an elementary variant of Theorem~\ref{thm:one} holds.  However, versions over
the Cox ring for any smooth projective toric variety appear to be
intrinsically more complicated.  For geometric applications, one should
probably exclude all modules that contain an element annihilated by a power of
the irrelevant ideal.  Finally, we have not begun to analyze the semigroup
within the closed convex cone formed by the Hilbert functions of modules.

\subsection*{Contents of the paper}
Section~\ref{sec:general} gives both a combinatorial proof and an algebraic
proof for the linearization of Macaulay's Theorem, also known as
Theorem~\ref{thm:Macaulay}.  Our description of the closed convex hull of the
Hilbert function of artinian $S$-modules generated in degree zero, given in
Corollary~\ref{cor:TVS}, and the proof for Theorem~\ref{thm:one} follow.  In
Section~\ref{sec:a-invariant}, Proposition~\ref{pro:Pcone} describes the
extreme rays of the closed convex hull of $\NN^\NN \cap V_{n,a}$.  After a
triple of technical lemmata, we prove Theorem~\ref{thm:two}.  The section ends
with Corollary~\ref{cor:Qcone}, which explicitly describes the supporting
hyperplanes and extreme rays of $Q_{n,a}$, and four examples illustrating this
corollary.  We prove Theorem~\ref{thm:three} in Section~\ref{sec:regularity}
and we close with Proposition~\ref{pro:BettiBounds}, which explicitly bounds
Betti numbers linearly via Hilbert functions.

\subsection*{Conventions} 
We write $\NN$ for the set of non-negative integers and $\kk$ for an arbitrary
field.  A set is countable if it has the same cardinality as $\NN$.
Throughout the document, the polynomial ring $S := \kk[x_0, x_1, \dotsc, x_n]$
has the standard $\NN$-grading induced by setting $\deg(x_i) = 1$ for all $0
\leq i \leq n$.  All $S$-modules are finitely generated and $\NN$-graded.

\subsection*{Acknowledgements}
We thank Kathrin Vorwerk and Mike Roth for their valuable insights.  The
computer software \emph{Macaulay2}~\cite{M2} was indispensable for generating
examples and discovering the correct statements.  The second author was
partially supported by NSERC.

\section{Modules generated in degree zero}
\label{sec:general}

\noindent
This section considers the closed convex hull of Hilbert functions of
$S$-modules generated in degree zero.  The key result, namely
Theorem~\ref{thm:Macaulay}, describes the linear inequalities satisfied by the
Hilbert function of such a module.  By working in appropriate
infinite-dimensional topological vector spaces, we obtain descriptions of the
supporting hyperplanes and the extreme rays for the closed convex hull of
Hilbert functions for any collection of modules containing all artinian
$S$-modules.

If $M$ is a finitely generated $\NN$-graded $S$-module, then its Hilbert
function is the numerical function $h_M \colon \NN \to \NN$ defined by
$h_M(j) := \dim_\kk M_j$.

\begin{theorem}
  \label{thm:Macaulay}
  The Hilbert function of a finitely generated $\NN$-graded $S$-module $M$
  generated in degree zero satisfies the following inequalities for all $j \in
  \NN$:
  \begin{xalignat*}{3}
    \frac{h_M(j)}{h_S(j)} &\geq \frac{h_M(j+1)}{h_S(j+1)} & &\text{or}
    & (n+j+1) \, h_M(j) &\geq (j+1) h_M(j+1) \, .
  \end{xalignat*}
\end{theorem}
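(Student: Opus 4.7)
The plan exploits the identity $(n+j+1) h_S(j) = (j+1) h_S(j+1)$, which makes the two stated inequalities equivalent and shows that each asserts monotonicity of the ratio $h_M(j)/h_S(j)$.

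For the algebraic proof, my approach is induction on $n$. The base case $n = 0$ amounts to showing $h_M$ is non-increasing, which follows from the structure theorem for finitely generated $\kk[x_0]$-modules: any such module generated in degree zero decomposes into a direct sum of copies of $\kk[x_0]$ and $\kk[x_0]/(x_0^k)$ with generators in degree zero, and each summand has non-increasing Hilbert function. For the inductive step, I would (after extending scalars so that $\kk$ is infinite) choose a generic linear form $\ell \in S_1$ and set $\bar S := S/(\ell)$ and $\bar M := M/\ell M$. Then $\bar M$ is generated in degree zero over $\bar S$, so by induction $(n+j) h_{\bar M}(j) \geq (j+1) h_{\bar M}(j+1)$. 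The exact sequence
\[
  0 \to (0:_M \ell)(-1) \to M(-1) \xrightarrow{\ell} M \to \bar M \to 0
\]
relates $h_{\bar M}$ to $h_M$: when $\ell$ is a non-zero-divisor on $M$, one has $h_{\bar M}(j) = h_M(j) - h_M(j-1)$, and substituting this into the inductive bound together with the already-established inequality at index $j-1$ for $M$ recovers the desired inequality at index $j$.

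The hard part will be handling the case in which every linear form is a zero-divisor on $M$, that is, when $M$ carries nontrivial $\mathfrak{m}$-torsion, because then the correction term $h_{(0:_M\ell)}(j-1)$ from the exact sequence does not vanish. To address this, I would argue separately on the $\mathfrak{m}$-torsion part of $M$ (concentrated in finitely many degrees, and amenable to a direct analysis via its decomposition by annihilator) and on the torsion-free quotient (on which a generic linear form is a non-zero-divisor), or equivalently work directly with the presenting submodule of $M$ inside a free module, where the relevant growth inequality has a cleaner form.

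For the combinatorial proof, since the inequality depends only on $h_M$, I would replace the submodule presenting $M$ by its generic initial submodule (with respect to a term order) and reduce to the Borel-fixed monomial case. The inequality then becomes a concrete statement about how the degree-$(j+1)$ monomial basis of a Borel-fixed submodule relates to the degree-$j$ one, which I would verify by double-counting variable-slot incidences in monomials, exploiting the Borel property to control the map sending a degree-$(j+1)$ monomial together with a marked variable slot to its corresponding divisor of degree $j$.
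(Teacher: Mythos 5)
Your combinatorial sketch is essentially the paper's combinatorial proof: both reduce to a monomial quotient by passing to an initial submodule of a free presentation, and both finish with an injection that sends a degree-$(j+1)$ monomial together with a marked slot to its degree-$j$ divisor.  One small economy you miss: the paper needs only an arbitrary initial submodule, since the injection uses nothing beyond the fact that divisors of standard monomials are standard; you invoke generic initial submodules and the Borel property, which is strictly more than required (and in positive characteristic a generic initial module is only $p$-Borel-fixed).

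Your algebraic proof takes a genuinely different route from the paper's, which simply cites Hulett's module-theoretic extension of Macaulay's theorem (the Hilbert function of $M$ is bounded above by that of a lex quotient of a free module) together with a scaling trick to linearize the Macaulay bound.  Your double induction --- outer on $n$ via a generic hyperplane section, inner on $j$ --- works cleanly in the non-zero-divisor case, exactly as you describe.  However, the gap you flag in the zero-divisor case is genuine and is not closed by your proposed workarounds.  Writing $K := (0:_M \ell)$, the four-term exact sequence combined with the inductive hypothesis for $\bar M$ and the inner induction at index $j-1$ yields
\[
(n+j+1)\,h_M(j) + c_j \;\geq\; (j+1)\,h_M(j+1), \qquad c_j := (n+j)\,h_{K}(j-1) - (j+1)\,h_{K}(j) \, ,
\]
so you need $c_j \leq 0$ for every $j$.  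This is a \emph{reverse} growth condition on the finite-length module $K$, which must fail once $K$ starts to shrink.  Concretely, take $n=1$ and $M = S/\langle x_0^2, x_0 x_1 \rangle$, so $h_M = (1,2,1,1,\dotsc)$; for generic $\ell$ one has $K = \kk \cdot \bar{x}_0$ concentrated in degree $1$, hence $c_2 = 3\cdot 1 - 3\cdot 0 = 3 > 0$, and the chain of inequalities only produces $7 \geq 3$, not the desired $4 \geq 3$.  Splitting $0 \to H^0_{\mathfrak{m}}(M) \to M \to M' \to 0$ and using linearity of the inequality also fails: while $M'$ is generated in degree zero, the torsion submodule generally is not and can itself violate the Macaulay inequality (in the same example, at $j=0$ it would require $2\cdot 0 \geq 1\cdot 1$).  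Your alternative of working with the presenting submodule $N \subset F$ just transposes the problem to proving $(j+1)h_N(j+1) \geq (n+j+1)h_N(j)$ for arbitrary graded submodules $N$ of a free module, which is the same difficulty in dual form.  A fix would need an argument handling $M$ and its $\mathfrak{m}$-torsion simultaneously; the cleanest escape is the paper's monomial reduction, which sidesteps torsion entirely, or its invocation of Hulett's theorem.
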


\begin{proof}[Combinatorial Proof]
  We have $h_S(j) = \binom{n+j}{j}$ for all $j \in \NN$ and the Absorption
  Identity states that $k \binom{\ell}{k} = \ell \binom{\ell-1}{k-1}$ for all
  $k,\ell \in \ZZ$, so the two forms of inequalities are equivalent and it is
  enough to prove that
  \begin{align}
    \label{eq:Macaulay}
    (n+j+1) \, h_M(j) \geq (j+1) h_M(j+1) \, .
  \end{align}
  Since $M$ is generated in degree zero, there exists a surjective
  homomorphism of $\NN$-graded $S$\nobreakdash-modules $\eta \colon S^{(m)}
  \to M$ where $S^{(m)}$ is the $m$-fold direct sum of $S$ for some $m \in
  \NN$.  By choosing a monomial order on $S^{(m)}$, we see that both $M$ and
  the quotient of $S^{(m)}$ by the monomial submodule generated by the leading
  terms of $\ker(\eta)$ have the same Hilbert function.  In both cases, the
  monomials not belonging to the initial submodule form a $\kk$-vector spaces
  basis; see Theorem~15.3 in \cite{Eisenbud}.  Hence, it suffices to establish
  the inequality \eqref{eq:Macaulay} in the case $M = S/I$ for some monomial
  ideal $I$.

  To accomplish this, we interpret both sides of the inequality
  \eqref{eq:Macaulay} as cardinalities of sets and describe an appropriate
  injective map.  Using the stars-and-bars correspondence (see \S1.2 in
  \cite{Stanley}), we identify the set $\mathcal{M}_{j+1}$ of monomials in
  $S_{j+1}$ with the $(j+1)$-subsets of $\{1,2, \dotsc, n+j+1\}$.  Consider
  the set $\mathcal{X} \subseteq \{1, 2, \dotsc, j+1\} \times
  \mathcal{M}_{j+1}$ consisting of all pairs $(i,\sigma)$ such that $i \in
  \sigma$, and let $\mathcal{Y} := \{1, 2, \dotsc, n+j+1\} \times
  \mathcal{M}_j$.  Define the map $\Phi \colon \mathcal{X} \to \mathcal{Y}$ by
  $\Phi(i,\sigma) = (i, \sigma \setminus \{i\})$.  This map is injective,
  because we can reconstruct $\sigma$ from the pair $(i, \sigma \setminus
  \{i\})$.  If $\mathcal{X}' \subseteq \mathcal{X}$ and $\mathcal{Y}'
  \subseteq \mathcal{Y}$ are the subsets for which the second components
  correspond to monomials not in $I$, then we have 
  \begin{xalignat*}{3}
    |\mathcal{X}'| &= (j+1) h_{S/I}(j+1) && \text{and} & |\mathcal{Y}'| &=
    (n+j+1) h_{S/I}(j) \, .
  \end{xalignat*}
  Since $\sigma \setminus \{j\}$ corresponds to a monomial not in $I$ whenever
  $\sigma$ corresponds to a monomial not in $I$, restricting the map $\Phi$
  yields the required injection from $\mathcal{X}'$ to $\mathcal{Y}'$.
\end{proof}

\begin{proof}[Algebraic Proof]
  Generalizing Macaulay's characterization of Hilbert functions of
  $\NN$-graded $\kk$-algebras (i.e.\ the Main Theorem in \cite{Macaulay} or
  Theorem~4.2.10 in \cite{BH}), Corollary~6 in \cite{Hulett} implies that the
  Hilbert function of $M$ is bounded above by the Hilbert function of the
  quotient of a free module $S^{(m)}$ by a lexicographic submodule.  In
  particular, if $h_M(j)$ is a multiple of $h_S(j) = \binom{n+j}{j}$, then
  $h_M(j+1)$ is bounded above by the same multiple of $h_S(j+1) =
  \binom{n+j+1}{j+1} = \frac{n+j+1}{j+1} \binom{n+j}{j} = \frac{n+j+1}{j+1}
  h_S(j)$.  Hence, for an appropriate $k \in \NN$, $h_{M^{(k)}}(j)$ is a
  multiple of $h_S(j)$ and we obtain
  \begin{align*}
    k \, h_M(j+1) &= h_{M^{(k)}}(j+1) \leq \left( \frac{n+j+1}{j+1} \right)
    h_{M^{(k)}}(j) = k \, \left( \frac{n+j+1}{j+1} \right) h_{M}(j) \,
    . \qedhere
  \end{align*}
\end{proof}

\begin{remark}
  If one replaces the symmetric algebra $S$ with an exterior algebra (see
  Corollary~4.18 in \cite{BNV}), then the analogue of
  Theorem~\ref{thm:Macaulay} also holds.  However, these inequalities do not
  hold in all rings.  For example if $R := \kk[x_0,x_1]/\langle x_0^2, x_0 x_1
  \rangle$ and $M := R/\langle x_0 \rangle \cong \kk[x_1]$, then we have
  $h_M(1)/h_R(1) = \frac{1}{2} < 1 = h_M(2)/h_R(2)$.
\end{remark}

Let $\convergent_0 \subset \QQ^\NN$ be the Banach space consisting of all
convergent real sequences $h \colon \NN \to \QQ$ such that $h(j) \to 0$ as $j
\to \infty$ equipped with the sup norm; see \cite{TVS}*{p.\ 31}.  For any
finitely generated $\NN$-graded artinian $S$-module $M$, we have $h_M \in
\convergent_0$, because the sequence is eventually zero.

\begin{corollary}
  \label{cor:TVS}
  The closed convex hull in $\convergent_0$ of the Hilbert functions of
  artinian $S$-modules generated in degree zero is the intersection of the
  closed half-spaces defined by the inequalities $(n+j+1) \, h(j) \geq (j+1)
  \, h(j+1)$ for $j \in \NN$.  Moreover, the extreme rays of this cone are
  generated by the Hilbert functions of the $S$-modules $S/\langle x_0, x_1,
  \dotsc, x_n \rangle^i$ where $i \in \NN$.
\end{corollary}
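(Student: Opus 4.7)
\emph{Plan.} Set $\mathfrak{m} := \langle x_0, x_1, \dotsc, x_n \rangle$ and $M_i := S / \mathfrak{m}^i$, so $h_{M_i}(j) = h_S(j)$ for $j < i$ and $h_{M_i}(j) = 0$ otherwise; in particular $h_{M_i} \in \convergent_0$. The change of variables $g(j) := h(j)/h_S(j)$ rewrites the defining inequalities as the condition that $g$ is non-increasing. For any $h \in \convergent_0$ satisfying them, both $g \geq 0$ and $g(j) \to 0$ follow automatically: a value $g(j_0) < 0$ would persist by monotonicity and force $h(j) \leq g(j_0) h_S(j) \to -\infty$; and $g(j) \leq h(j) \to 0$ since $h_S(j) \geq 1$.

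The heart of the proof is to decompose $g$ as the sum of the step functions $h_{M_i}/h_S = \chi_{\{j<i\}}$ weighted by $c_i := g(i-1) - g(i) \geq 0$. Telescoping gives $g(j) = \sum_{i > j} c_i$ and $\sum_{i \geq 1} c_i = g(0) < \infty$, hence $h = \sum_{i \geq 1} c_i h_{M_i}$ pointwise. I would upgrade this to convergence in $\convergent_0$ by splitting on $j < N$ versus $j \geq N$ and using the identity $h_S(N-1)/h_S(N) = N/(n+N)$:
\[
\Bigl\| h - \sum_{i=1}^N c_i h_{M_i} \Bigr\|_\infty \leq \max\Bigl(\tfrac{N}{n+N}\, h(N),\ \sup_{j \geq N} h(j)\Bigr) \leq \sup_{j \geq N} h(j) \to 0.
\]
Each partial sum is a non-negative rational combination (after approximation) of the Hilbert functions $h_{M_i^{\oplus k}} = k\, h_{M_i}$ together with the Hilbert function of the zero module, so it lies in the convex hull of the admissible Hilbert functions; passing to the limit puts $h$ in the closed convex hull.

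The reverse inclusion is immediate from Theorem~\ref{thm:Macaulay}, since the half-space description defines a closed convex set that contains every Hilbert function of an admissible module. For the extreme rays, observe that on $h_{M_i}$ the $j$-th inequality is tight precisely when $j \neq i-1$, so the ray generated by $h_{M_i}$ is the intersection with the cone of all but one of the bounding hyperplanes---a one-dimensional exposed face, hence an extreme ray. Conversely, any extreme ray must be of this form, because the decomposition $h = \sum c_i h_{M_i}$ exhibits every point of the cone as a non-negative combination of the $h_{M_i}$.

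The main obstacle is verifying the $\convergent_0$-convergence of the infinite decomposition; this works cleanly thanks to the identity $h_S(N-1)/h_S(N) = N/(n+N)$, while the remainder reduces to the elementary fact that a non-negative non-increasing sequence tending to $0$ is the sum of indicators of its upper level sets.
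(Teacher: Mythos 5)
Your proof is correct, and it takes a genuinely more constructive route than the paper. Where the paper observes that $h_{S/\mathfrak{m}^i}$ lies on each bounding hyperplane $H_j$ except $H_{i-1}$, produces the convergent series $\sum_{k\neq i}\tfrac{1}{k!}\,h_{S/\mathfrak{m}^k}$ (lying on $H_j$ exactly when $j=i-1$) to show the half-spaces are supporting, and then invokes the Krein--Milman theorem to identify the extreme rays, you bypass Krein--Milman entirely: the change of variables $g(j)=h(j)/h_S(j)$ turns the Macaulay inequalities into monotonicity of $g$, and you decompose any $h$ in the half-space intersection explicitly as the $\convergent_0$-convergent sum $\sum_{i\geq 1}\bigl(g(i-1)-g(i)\bigr)\,h_{S/\mathfrak{m}^i}$, with the remainder after $N$ terms bounded by $\sup_{j\geq N}h(j)\to 0$. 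This single calculation simultaneously proves the half-space intersection is contained in the closed convex hull (a direction the paper asserts rather tersely), identifies the extreme rays, and yields the uniqueness of the decomposition --- the simplicial property the paper records only as a subsequent remark. Two small points of presentation: your deduction $g(j)\to 0$ via $g(j)\leq h(j)$ uses $h\geq 0$, which is the conclusion of the preceding sentence, so the order of inference should be made explicit; and the passage from real coefficients $c_i$ to membership in the closed convex hull (via rational approximation, scaling, and direct sums, using that $0$ is the Hilbert function of the zero module) deserves a full sentence rather than a parenthetical, since the set of Hilbert functions is not literally a $\QQ$-cone. Neither issue affects correctness.
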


\begin{proof}
  Theorem~\ref{thm:Macaulay} shows that the Hilbert function of any $S$-module
  $M$ generated in degree zero is contained in the intersection of the closed
  half-spaces determined by the inequalities $(n+j+1) \, h_M(j) \geq (j+1)
  \, h_M(j+1)$ for all $j \in \NN$.  For brevity, set $\mathfrak{m} :=
  \langle x_0, x_1, \dotsc, x_n \rangle$.  For $i \in \NN$, we have
  \begin{align}
    \label{eq:artinian}
    h_{S/\mathfrak{m}^i}(j) = 
    \begin{cases}
      \binom{n+j}{j} & \text{if $j < i$} \\
      0 & \text{if $j \geq i$.}
    \end{cases}
  \end{align}
  These Hilbert functions are linearly independent in $\convergent_0$, so each
  $S/\mathfrak{m}^i$ corresponds to an extreme ray of the closed convex cone
  $K$ generated by $\{ \bigl( h_{S / \mathfrak{m}^i}(j) \bigr) : i \in \NN
  \}$.  Equation~\eqref{eq:artinian} also yields $(n+i) \,
  h_{S/\mathfrak{m}^i}(i-1) > (i) \, h_{S/\mathfrak{m}^i}(i)$ and,
  together with the Absorption Identity, shows that $(n+j+1) \,
  h_{S/\mathfrak{m}^i}(j) = (j+1) \, h_{S/\mathfrak{m}^i}(j+1)$ for all $j
  \neq i-1$.  Since $\frac{1}{\ell!}\binom{n+\ell-1}{n} \to 0$ as $\ell \to
  \infty$, the sequences
  \[
  \sum_{
    \begin{subarray}{c}
      k = 0\\
      k \neq i
    \end{subarray}
  }^{\ell} \frac{1}{k!} h_{S / \mathfrak{m}^k}(j)
  \]
  converge as $\ell \to \infty$ and the limit lies on the closed hyperplane
  $(n+j+1) \, h(j) = (j+1) \, h(j+1)$ if and only if $j = i-1$.  Hence, $K$ is
  intersection of the closed half-spaces defined by the inequalities $(n+j+1)
  \, h(j) \geq (j+1) \, h(j+1)$ for $j \in \NN$.  Finally, the Krein-Milman
  Theorem (e.g. Theorem~1 on \cite{TVS}*{p.\ 187} and the Corollary on
  \cite{TVS}*{p.\ 189}) establishes that every extreme ray of the cone of
  Hilbert functions for artinian $S$-modules generated in degree zero
  corresponds to an $S$-module $S/\mathfrak{m}^i$ for some $i \in \NN$.
\end{proof}

\begin{remark}
  The proof of Corollary~\ref{cor:TVS} exploits only the topological vector
  space structure of the Banach space $\convergent_0$.
\end{remark}

\begin{remark}
  Since every Cohen-Macaulay module has an artinian reduction,
  Corollary~\ref{cor:TVS} leads immediately to a description of the closed
  convex hull of the Hilbert function of Cohen-Macaulay $S$-modules generated
  in degree zero.
\end{remark}

By working in a larger space, we can extend Corollary~\ref{cor:TVS}.  Let $E
\subseteq \QQ^\NN$ be a topological $\QQ$-vector space that contains the
Hilbert functions of all artinian $S$-modules generated in degree zero.  For
example, if $E$ is the weighted $\ell^\infty$-space consisting of all bounded
sequences $h \colon \NN \to \QQ$ with respect to the norm $\| h \| := \sup_j
\bigl| n^{-j} h(j) \bigr|$, then the Hilbert function of every finitely
generated $\NN$-graded $S$-module is contained in $E$.

\begin{proof}[Proof of Theorem~\ref{thm:one}]
  Set $\mathfrak{m} := \langle x_0, x_1, \dotsc, x_n \rangle$.  Since every
  Hilbert function in $E$ can expressed uniquely as a non-negative countable
  linear combination of the Hilbert functions of the $S$-modules $S /
  \mathfrak{m}^i$ for $i \in \NN$, the cone of all Hilbert functions in $E$ is
  generated by $\{ h_{S/\mathfrak{m}^i}(j) : i \in \NN\}$.  Hence, the
  assertions follow from Corollary~\ref{cor:TVS}.
\end{proof}

\begin{remark}
  The closed convex cones described in Theorem~\ref{thm:one} and
  Corollary~\ref{cor:TVS} are both simplicial (in the sense of Choquet
  theory).  In other words, every point in the cone is a unique countable
  linear combination of the Hilbert functions of the $S$-modules $S/\langle
  x_0, x_1, \dotsc, x_n \rangle^i$ where $i \in \NN$.
\end{remark}

\section{Modules with bounded \texorpdfstring{$a$}{a}-invariant}
\label{sec:a-invariant}

\noindent
In this section, we replace the ambient infinite-dimensional vector space $E$
appearing in Section~\ref{sec:general} with a finite-dimensional vector space.
We accomplish this by concentrating on $S$\nobreakdash-modules with bounded
$a$-invariant.  In other words, we insist that the Hilbert function and
Hilbert polynomial agree for all integers greater than $a$.  To determine the
supporting hyperplanes and extreme rays, we related the cone of Hilbert
functions with bounded $a$\nobreakdash-invariant to the cone of non-negative
sequences.

Fix $n \in \NN$ and let $a \in \ZZ$ satisfy $a \geq -n$.  Consider the
finite-dimensional subspace $V_{n,a} \subset \QQ^\NN$ consisting of all
sequences $h \colon \NN \to \QQ$ such that the associated generating functions
satisfy
\[
\sum_{j \in \NN} h(j) t^j = \frac{b_0 + b_1 t + \dotsb + b_{a+n} t^{a+n}}
{(1-t)^n} \in \QQ(t)
\]
for some $b_0, b_1, \dotsc, b_{a+n} \in \QQ$.  Following Corollary~4.3.1 in
\cite{Stanley}, this condition on the generating function is equivalent to the
existence of $q_h \in \QQ[s]$ such that $q_h(j) = h(j)$ for all $j > a$.  As
an abuse of terminology, we refer to $q_h \in \QQ[s]$ as the Hilbert
polynomial of $h \in V_{n,a}$.  We identify a sequence $h \in V_{n,a}$ with
its generating function $\sum_{j} h(j) t^j \in \QQ(t)$ and regard $V_{n,a}$ as
a subspace of $\QQ(t)$.

\begin{definition}
  \label{def:positive}
  Let $P_{n,a}$ denote the closed convex hull in $V_{n,a}$ of the intersection
  $\NN^\NN \cap V_{n,a}$.  Informally, we say that $P_{n,a} \subset V_{n,a}$
  is the \define{cone of non-negative sequences}.
\end{definition}

\noindent
As $j \to \infty$, the inequalities $h(j) \geq 0$ for $j \in \NN$, which
define the cone $P_{n,a}$, simply assert that the leading coefficient of the
Hilbert polynomial is positive.

To give the dual description of $P_{n,a}$, it is convenient to introduce a
family of polynomials.  For an integer partition $\lambda := (\lambda_1,
\lambda_2, \dotsc, \lambda_r)$ where $\lambda_1 \geq \lambda_2 \geq \dotsb
\geq \lambda_r \geq 0$, we define
\[
p_{\lambda}(s) := \prod_{i = 1}^{r} (s - \lambda_{r-i+1} - 2i +2)(s -
\lambda_{r-i+1} -2i+1) \in \ZZ[s] \, .
\]  
Hence, the set $\{ p_{\lambda} : \lambda \vdash r\}$ consists of all monic
polynomials of degree $2r$ with non-negative integer roots that appear in
consecutive pairs.

The extreme rays in $P_{n,a}$ depends on the parameter $a$.  For instance,
there is an additional type of extreme ray when $a \geq 0$.  Nevertheless, we
provide a uniform characterization by introducing an auxiliary parameter
$\hat{a}$.

\begin{proposition}
  \label{pro:Pcone}
  Set $\hat{a} := a + \max(1,-a)$. The extreme rays of the non-negative cone
  $P_{n,a}$ correspond to the polynomials $1,t, \dotsc, t^{a}$ and the power
  series
  \begin{xalignat*}{2}
    &\sum_{j \geq \hat{a}} \Bigl( p_\lambda(j-\hat{a}) \prod_{\ell =
      1}^{\hat{a}-a-1} (j + \ell) \Bigr) t^j \, , & &\sum_{j \geq \hat{a}}
    \Bigl( p_\mu(j-\hat{a}-1) \prod_{\ell=0}^{\hat{a}-a-1} (j + \ell) \Bigr) t^j
  \end{xalignat*}
  where $\lambda$ ranges over all integer partitions with at most $\lfloor
  (n-\hat{a}+a)/2 \rfloor$ parts and $\mu$ ranges over all integer partitions
  with at most $\lfloor (n-\hat{a}+a-1)/2 \rfloor$ parts.
\end{proposition}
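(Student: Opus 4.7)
The plan is to reduce the enumeration of extreme rays of $P_{n,a}$ to a standard extremality problem for a polynomial cone. First, I would decompose the ambient space. When $a \geq 0$, write $V_{n,a} = V_{n,a}^{\text{init}} \oplus V_{n,a}^{\text{poly}}$, where $V_{n,a}^{\text{init}}$ is spanned by the monomials $t^0, \ldots, t^a$ (sequences supported on $\{0, \ldots, a\}$) and $V_{n,a}^{\text{poly}}$ consists of sequences vanishing on $\{0, \ldots, a\}$ that agree with a Hilbert polynomial $q$ of degree less than $n$ elsewhere. This decomposition is compatible with the non-negativity constraints, so $P_{n,a}$ splits, its extreme rays being the coordinate rays $t^0, \ldots, t^a$ together with the extreme rays contributed by the polynomial part. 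When $a < 0$, the initial part is empty; the condition $\deg P \leq a + n$ on the numerator of the generating function forces the Hilbert polynomial to vanish at the negative integers $-1, -2, \ldots, a+1$, equivalently $q$ must be divisible by the fixed polynomial $R(s) := \prod_{\ell=1}^{-a-1}(s + \ell)$, which is positive on $\NN$. Writing $q = R \cdot r$ and shifting $s \mapsto s - \hat{a}$ then identifies the remaining polynomial cone with $C_{n'} := \{r \in \QQ[s]_{<n'} : r(j) \geq 0 \text{ for all } j \in \NN\}$ for a suitable $n'$.

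The heart of the proof is describing the extreme rays of $C_{n'}$. I would show these are precisely the polynomials $p_\lambda(s)$ for partitions $\lambda$ with at most $\lfloor (n'-1)/2 \rfloor$ parts together with $s \cdot p_\mu(s-1)$ for partitions $\mu$ with at most $\lfloor (n'-2)/2 \rfloor$ parts. Non-negativity on $\NN$ is immediate because each factor $(s - \alpha)(s - \alpha - 1)$ with $\alpha \in \NN$ takes non-negative values at integer $s$ and $s \geq 0$ on $\NN$. Extremality follows from a boundedness argument: in any decomposition $r = r_1 + r_2$ with $r_i \in C_{n'}$, both $r_i$ must vanish at every integer zero of $r$ and so factor as $r_i = g_i \cdot r$ with $g_1 + g_2 = 1$; since each $g_i$ is bounded on the infinite set of $j \in \NN$ where $r(j) > 0$, it must be a constant polynomial, forcing $r_1, r_2$ to be scalar multiples of $r$.

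To rule out other extreme rays, I would classify the possible integer zero sets $Z(r) \subset \NN$. A polynomial changes sign at each odd-multiplicity real zero, so the constraint $r(j) \geq 0$ on $\NN$ combined with the positive leading coefficient forces a parity condition across each gap between successive zero blocks of $Z(r)$: the total number of integer zeros lying to the right of that gap must be even. Consequently every block of consecutive integer zeros in $Z(r)$ has even size, with the single exception that the first block may have odd size provided it begins at $0$. Matching these two cases to partitions produces exactly the families $p_\lambda$ (all blocks even, assembled as consecutive pairs) and $s \cdot p_\mu(s-1)$ (odd initial block at $0$ factors out $s$ and leaves further pair blocks), while the degree bound $\deg r < n'$ yields the bounds on the number of parts. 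Reversing the substitution $q = R \cdot r$ and the shift in the variable then recovers the power series in the statement, the explicit products $\prod (j + \ell)$ arising from expansion of $R$.

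The main obstacle is the classification step: the parity analysis must carefully isolate the boundary behavior at $s = 0$, which is the unique integer where an odd-length zero block is allowed. Treating all integer zeros uniformly, or conflating the finite vanishing conditions with the ``constraint at infinity'' imposed by positivity of the leading coefficient, would incorrectly eliminate the second family of extreme rays.
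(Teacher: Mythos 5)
Your proof takes a genuinely different route from the paper's.  The paper works directly in coordinates with respect to the triangular basis $\{t^a,\dotsc,1,(1-t)^{-1},\dotsc,(1-t)^{-n}\}$ (respectively $\{(1-t)^{a},\dotsc,(1-t)^{-n}\}$ when $a<0$), counts how many of the vanishing conditions $h(j)=0$ a ray must satisfy, splits into cases according to how many occur at $j\geq\hat a$ versus $j\leq a$, identifies a degree-$(n-1)$ polynomial $q(s)=h(s+\hat a)$ whose negative roots are forced and whose non-negative roots come in consecutive pairs, and finally inducts on $n$ by passing to the hyperplane $c_n=0$.  You instead split the ambient space, for $a\geq 0$, as $V_{n,a}=V^{\text{init}}\oplus V^{\text{poly}}$ with the two summands seeing disjoint sets of inequality constraints, so that $P_{n,a}$ is a direct product of cones (a cleaner way to peel off the monomial rays $t^0,\dotsc,t^a$); and for $a<0$ you divide out the fixed factor $R(s)=\prod_{\ell=1}^{-a-1}(s+\ell)$ forced by the degree bound on the numerator.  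Both cases are then reduced to the single ``universal'' cone $C_{n'}$ of polynomials of bounded degree that are non-negative on $\NN$.  Your extremality argument for $C_{n'}$ (divide by $r$, observe $g_1+g_2=1$ with $g_i\geq 0$ on a cofinite subset of $\NN$, hence $g_i$ constant) is cleaner than the paper's and, since it treats all degrees at once, it replaces the paper's induction on $n$.

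Two parts of your sketch need more care.  First, the boundedness argument only applies once you know that an extreme $r\in C_{n'}$ has exactly $\deg r$ roots, all simple and all in $\NN$; you need a separate argument to exclude extreme rays with repeated roots, non-real roots, or roots outside $\NN$ (the paper handles this via the equality $\deg q = n-1 = (\text{number of forced roots})$ at the top level of its induction).  Your parity discussion classifies the shape of $Z(r)$ but does not by itself rule out, say, a quadratic with no rational roots.  Second, the translation back to the power series in the statement is glossed over.  For $a<0$ the shift $\hat a$ vanishes and the product $\prod(j+\ell)$ really does come from $R$, as you say; but for $a\geq 0$ there is no $R$, only the shift $j\mapsto j-\hat a$, and the second family is then produced by the factor $s$ of $s\,p_\mu(s-1)$, which after the shift becomes $j-\hat a$ rather than $j$.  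You should carry this out explicitly rather than appealing to ``expansion of $R$'', because the two do not coincide when $\hat a>0$; in fact this is exactly the place where your approach and the displayed formula in the proposition need to be reconciled carefully.
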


\begin{proof}
  The Binomial Theorem yields both $t^k = (1-t)^{-n} \sum\nolimits_{i}
  \binom{n}{i}(-1)^i t^{k+i}$ for $0 \leq k \leq a$ and $(1-t)^{-\ell} =
  (1-t)^{-n} \sum\nolimits_{i} \binom{n-\ell}{i}(-1)^i t^i$ for $1 \leq \ell
  \leq n$.  Hence, when $a \geq 0$, the rational functions $t^a, t^{a-1},
  \dotsc, 1, (1-t)^{-1}, (1-t)^{-2}, \dotsc, (1-t)^{-n}$ form a triangular
  basis for $V_{n,a}$.  Let $(c_{-a}, c_{-a+1}, \dotsc, c_0, c_{1}, c_{2},
  \dotsc, c_{n})$ denote the coordinates of $h \in P_{n,a}$ with respect to
  this ordered basis.  When $a < 0$, just the rational functions $(1-t)^{a},
  (1-t)^{a-1}, \dotsc, (1-t)^{-n}$ form a triangular basis for $V_{n,a}$.  For
  consistency, let $(c_{-a}, c_{-a+1}, \dotsc, c_{n})$ denote the coordinates
  of $h \in P_{n,a}$ in this situation.  Since the Generalized Binomial
  Theorem implies that $(1-t)^{-\ell} = \sum_{j} \binom{\ell+j-1}{\ell-1}
  t^j$, we obtain the inequalities
  \begin{xalignat*}{2}
    c_{-j} + \sum_{\ell=1}^{n} \binom{\ell+j-1}{\ell-1} c_{\ell} = h(j)
    &\geq 0 & & \text{for $0 \leq j \leq a$, and} \\
    \sum_{\ell = \hat{a} - a}^{n} \binom{\ell+j-1}{\ell-1} c_{\ell} = h(j) &\geq
    0 & &\text{for $j \geq \hat{a}$.}
  \end{xalignat*}
  The binomial coefficient $\binom{\ell+s-1}{\ell-1}$ is a polynomial in
  $\QQ[s]$ of degree $\ell-1$.  Hence, for $s \gg 0$, the leading coefficient
  of $\sum_\ell \binom{\ell+s-1}{\ell-1} c_\ell$ determines its sign and we
  obtain $c_n \geq 0$.
  
  Since $\dim(V_{n,a}) = n+a+1$, any extreme ray $h \in P_{n,a}$ must satisfy
  $h(j) = 0$ for at least $n+a$ distinct $j \in \NN$.  Suppose that we have at
  least $n$ equalities $h(j) = 0$ with $j \geq \hat{a}$.  It follows that
  $c_\ell = 0$ for $1 \leq \ell \leq n$, because the polynomials $\bigl\{
  \binom{\ell+s-1}{\ell-1} : 1 \leq \ell \leq n \bigr\}$ form a triangular
  basis for the vector space of all polynomials in $\QQ[s]$ with degree at
  most $n-1$.  To obtain a ray, we must also have $c_{-j} = h(j) = 0$ for all
  but one $j$ satisfying $0 \leq j \leq a$.  Hence, we have $a \geq 0$ and the
  extreme rays in this case correspond to the polynomials $1, t, \dotsc, t^a$.

  Now, suppose that $c_n > 0$ and that we have at most $n - \hat{a} + a$
  equalities $h(j) = 0$ with $j \geq \hat{a}$.  To obtain a ray, we must have
  $h(j) = 0$ for all $0 \leq j \leq a$ and $h(j) = 0$ for exactly $n - \hat{a}
  + a$ distinct $j$ satisfying $j \geq \hat{a}$.  Hence, the polynomial
  \[
  q(s) := \sum\limits_{\ell = \hat{a}-a}^{n} \binom{\ell+s+\hat{a}-1}{\ell-1}
  c_\ell \in \QQ[s]
  \] 
  has $n - \hat{a} + a$ distinct non-negative integer roots.  This polynomial
  also has $\hat{a} - a -1$ distinct negative integer roots, namely $-1, -2,
  \dotsc, 1 - \hat{a} + a$.  Since $\deg(q) = n-1$, it is uniquely determined
  by its leading coefficient and these integer roots.  Furthermore, the real
  function $q$ changes sign at each root and the evaluation of $q$ at every
  non-negative integer is non-negative, so the non-negative roots of $q$ must
  come in consecutive pairs.  When $n - \hat{a}+a$ is odd, we need an even
  number of sign changes arising from the non-negative roots, so $0$ itself
  must be a root of $q$.  Thus, the extreme rays in this case correspond to
  the power series
  \begin{xalignat*}{3}
    &\sum_{j \geq \hat{a}} \Bigl( p_\lambda(j-\hat{a}) \prod_{\ell =
      1}^{\hat{a}-a-1} (j + \ell) \Bigr) t^j & & \text{or} &\sum_{j \geq
      \hat{a}} \Bigl( p_\mu(j-\hat{a}-1) \prod_{\ell=0}^{\hat{a}-a-1} (j + \ell)
    \Bigr) t^j
  \end{xalignat*}
  where $n-\hat{a}+a$ is even and the integer partition $\lambda$ has
  $(n-\hat{a}+a)/2$ parts or $n-\hat{a}+a$ is odd and the integer partition
  $\mu$ has $(n - \hat{a} + a - 1)/2$ parts.

  The remaining extreme rays of $P_{n,a}$ lie in the hyperplane $c_n = 0$ or
  equivalently $V_{n-1,a}$.  Therefore, induction on $n$ completes the proof.
\end{proof}

The more important cone in $V_{n,a}$ is generated by Hilbert functions.
Specifically, if $M$ is any finitely generated $\NN$-graded $S$-module without
free summands (i.e.\ $\dim(M) < n+1$), then the Hilbert function $h_M \colon
\NN \to \QQ$ is contained in $V_{n,a}$ for all $a \gg 0$; see Corollary~4.1.8
in \cite{BH}.  Moreover, we have $h_M \in V_{n,a}$ if and only if the
Hilbert function $h_M(j)$ equals the Hilbert polynomial $q_M(j)$ for all $j
> a$; see Corollary~4.1.12 in \cite{BH}.  When $M$ is a $\kk$-algebra, the
parameter $a$ is the $a$-invariant; see Definition~4.4.4 in \cite{BH}.

\begin{definition}
  \label{def:a-invariant}
  Let $Q_{n,a}$ denote the closed convex hull in $V_{n,a}$ of the Hilbert
  functions of finitely generated $\NN$-graded $S$-modules that are generated
  in degree zero and have no free summands.  Informally, we say that $Q_{n,a}
  \subset V_{n,a}$ is the \define{cone of Hilbert functions with bounded
    $a$-invariant}.
\end{definition}

To encode the inequalities appearing in Theorem~\ref{thm:Macaulay}, we
introduce the linear operator $T \colon \QQ^\NN \to \QQ^\NN$ defined by
$\bigl(T[h]\bigr)(j) := (n+j+1) h(j) - (j+1) h(j+1)$ for any $h \colon \NN \to
\QQ$.  For an associated generating function, we have $T \left[ \sum_j h(j)
  t^j \right] = \sum_j \bigl( T[h](j) \bigr) t^j$.  Despite our notation, the
operator $T$ depends on the parameter $n$.  Our first lemma shows that the
restriction of $T$ to $V_{n,a}$ has an elegant reinterpretation.

\begin{lemma}
  \label{lem:eigen}
  The subspace $V_{n,a}$ is $T$-invariant and $T = (n+1) -
  (1-t)\frac{d}{dt}$.  Moreover, the rational functions $(1-t)^i$ where $-n
  \leq i \leq a$ form an eigenbasis for $V_{n,a}$ and the eigenvalue of $T$
  corresponding to $(1-t)^i$ is $n+1+i$.
\end{lemma}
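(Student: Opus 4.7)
The plan is to verify the operator identity $T = (n+1) - (1-t)\frac{d}{dt}$ on generating functions, then read off the claimed eigenbasis by a one-line computation, from which $T$-invariance of $V_{n,a}$ will follow for free.

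First, I would translate the defining formula $(T[h])(j) = (n+j+1)h(j) - (j+1)h(j+1)$ into an operator identity on the generating function $f(t) := \sum_{j \geq 0} h(j) t^j$. The observations $f'(t) = \sum_{j \geq 0}(j+1) h(j+1) t^j$ and $t f'(t) = \sum_{j \geq 0} j\, h(j) t^j$ combine to give $\sum_{j \geq 0}(n+j+1) h(j) t^j = (n+1) f(t) + t f'(t)$; subtracting produces $T[f] = (n+1) f - (1-t) f'$, which is precisely the stated identity.

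Using this identity, a direct calculation gives $T[(1-t)^i] = (n+1)(1-t)^i - (1-t)\cdot(-i)(1-t)^{i-1} = (n+1+i)(1-t)^i$, so each $(1-t)^i$ is an eigenvector for $T$ with eigenvalue $n+1+i$. To see that $\{(1-t)^i : -n \leq i \leq a\}$ is a basis for $V_{n,a}$, I would rewrite each purported basis element as $(1-t)^i = (1-t)^{n+i}/(1-t)^n$, whose numerator is a polynomial of degree $n+i$. As $i$ ranges over $-n \leq i \leq a$, the numerators realize every degree $0, 1, \dotsc, n+a$ and are triangular in the monomial basis of $t$; they therefore span the space of polynomials in $t$ of degree at most $n+a$, and dividing by $(1-t)^n$ transports this to a basis for $V_{n,a}$, whose dimension is $n+a+1$. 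Since the corresponding eigenvalues $1, 2, \dotsc, n+a+1$ are pairwise distinct, the basis is genuinely an eigenbasis.

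Finally, $T$-invariance of $V_{n,a}$ is automatic, because $T$ carries each basis element $(1-t)^i$ to the scalar multiple $(n+1+i)(1-t)^i \in V_{n,a}$. I do not foresee any real obstacle here; the only mild subtlety is checking that every claimed basis element actually lies in $V_{n,a}$, but the rewriting $(1-t)^i = (1-t)^{n+i}/(1-t)^n$ resolves this immediately.
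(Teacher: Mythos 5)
Your proposal is correct and takes essentially the same approach as the paper: verify the operator identity $T = (n+1) - (1-t)\frac{d}{dt}$ by reindexing the power series, compute $T[(1-t)^i] = (n+1+i)(1-t)^i$ directly, and observe that the $(1-t)^i$ for $-n \leq i \leq a$ form a triangular basis of $V_{n,a}$ since the numerators $(1-t)^{n+i}$ have degrees $0,1,\dotsc,n+a$. The only cosmetic difference is that the paper establishes the triangular-basis claim first via the Binomial Theorem expansion $(1-t)^i = (1-t)^{-n}\sum_j \binom{i+n}{j}(-1)^j t^j$ and then checks the operator identity, whereas you do it in the opposite order; the content is the same.
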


\begin{proof}
  The Binomial Theorem yields $(1-t)^i = (1-t)^{-n} \sum_{j}
  \binom{i+n}{j}(-1)^j t^j$, so the rational functions $(1-t)^i$ for $-n \leq
  i \leq a$ also form triangular basis for $V_{n,a}$.  Hence, we have
  \begin{align*}
    \Bigl( (n+1) - (1-t)\tfrac{d}{dt} \Bigl)\Bigl[ \sum\nolimits_{j} h(j) t^j
    \Bigr] &= \sum\nolimits_{j} (n+1)h(j) t^j - \sum\nolimits_{j} j h(j)
    t^{j-1} + \sum\nolimits_{j} j h(j) t^{j} \\
    &= \sum\nolimits_{j} \bigl( (n+1+j) h(j) - (j+1)h(j+1) \bigr) t^j \\
    &= T \Bigl[ \sum\nolimits_{j} h(j) t^j \Bigr]
  \end{align*}
  and $T \bigl[ (1-t)^i \bigr] = (n+1)(1-t)^i - (1-t)(i)(1-t)^{i-1}(-1) =
  (n+1+i)(1-t)^i$.
\end{proof}

The second lemma in this section calculates the image under $T$ of the Hilbert
function for certain cyclic modules.

\begin{lemma}
  \label{lem:gens}
  Let $\ell \in \NN$ and $i \in \NN$ satisfy $0 \leq \ell \leq n+1$ and $1
  \leq i \leq a+n-\ell+2$.  For the cyclic module $M := S/\langle x_0, x_1,
  \dotsc, x_{\ell-1} \rangle^i$, we have $h_M \in V_{n,a}$, and
  \[
  T \biggl[ \sum\nolimits_{j} h_M(j) t^j \biggr] = T \left[
    (1-t)^{\ell-1-n} \sum_{k = 0}^{i-1} \binom{\ell-1+k}{k} t^k \right] = i
  \binom{\ell-1+i}{i} t^{i-1} (1-t)^{\ell-1-n} \, .
  \]
\end{lemma}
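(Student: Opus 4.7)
The plan is to compute $h_M$ directly via a tensor decomposition, verify that $h_M\in V_{n,a}$, and then apply the differential operator formula for $T$ provided by Lemma~\ref{lem:eigen}. First, I would use the graded $\kk$-algebra isomorphism $S \cong \kk[x_0,\dotsc,x_{\ell-1}] \otimes_\kk \kk[x_\ell,\dotsc,x_n]$, which identifies $M$ with $\bigl(\kk[x_0,\dotsc,x_{\ell-1}]/\langle x_0,\dotsc,x_{\ell-1}\rangle^i\bigr) \otimes_\kk \kk[x_\ell,\dotsc,x_n]$.  Since the Hilbert series of a tensor product of graded $\kk$-vector spaces is the product of the Hilbert series of the factors, I obtain
\[
\sum\nolimits_{j} h_M(j)\, t^j \;=\; \biggl(\sum_{k=0}^{i-1}\binom{\ell-1+k}{k}t^k\biggr)\cdot\frac{1}{(1-t)^{n-\ell+1}} \;=\; (1-t)^{\ell-1-n}\, P(t),
\]
where $P(t) := \sum_{k=0}^{i-1}\binom{\ell-1+k}{k}t^k$. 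Rewriting this as $(1-t)^{\ell-1}P(t)/(1-t)^n$ reveals a numerator of degree $i+\ell-2$, so the hypothesis $i \leq a+n-\ell+2$ is exactly the condition $h_M\in V_{n,a}$, and it simultaneously establishes the first of the two equalities in the displayed chain.

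For the remaining equality, I would apply $T = (n+1) - (1-t)\tfrac{d}{dt}$ from Lemma~\ref{lem:eigen} directly to $(1-t)^{\ell-1-n}P(t)$. Using the product rule on $\tfrac{d}{dt}\bigl[(1-t)^{\ell-1-n}P(t)\bigr]$ and collecting the $(1-t)^{\ell-1-n}P(t)$ terms, the coefficients $(n+1)$ and $(\ell-1-n)$ combine to give $\ell$, yielding
\[
T\bigl[(1-t)^{\ell-1-n}P(t)\bigr] \;=\; (1-t)^{\ell-1-n}\bigl[\ell\, P(t) - (1-t)\, P'(t)\bigr].
\]
It therefore suffices to establish the polynomial identity $\ell\, P(t) - (1-t)\, P'(t) = i\binom{\ell-1+i}{i}\,t^{i-1}$.

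The main obstacle — which I expect to amount only to careful bookkeeping — is the telescoping that collapses $\ell\, P - (1-t)\, P'$ to a single monomial. Writing $\ell P - (1-t)P' = \ell P + tP' - P'$ and reindexing the derivative terms yields
\[
\sum_{k=0}^{i-1}(\ell+k)\binom{\ell-1+k}{k}\, t^k \;-\; \sum_{k=0}^{i-2}(k+1)\binom{\ell+k}{k+1}\, t^k.
\]
The Absorption Identity, in the form $(k+1)\binom{\ell+k}{k+1} = (\ell+k)\binom{\ell-1+k}{k}$, forces cancellation term-by-term for $0 \leq k \leq i-2$, so only the top monomial $(\ell+i-1)\binom{\ell+i-2}{i-1}t^{i-1}$ survives. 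A final invocation of the Absorption Identity rewrites this survivor as $i\binom{\ell-1+i}{i}\, t^{i-1}$, which completes the proof.
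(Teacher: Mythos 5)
Your proof is correct and follows essentially the same route as the paper's: the tensor-product decomposition $S \cong \kk[x_0,\dotsc,x_{\ell-1}] \otimes_\kk \kk[x_\ell,\dotsc,x_n]$ is just the algebraic formulation of the paper's observation that the monomial basis of $M$ in degree $j$ is the disjoint union of products $\kk[x_0,\dotsc,x_{\ell-1}]_k \cdot \kk[x_\ell,\dotsc,x_n]_{j-k}$ for $0 \leq k \leq i-1$, and both then apply $T = (n+1) - (1-t)\tfrac{d}{dt}$ from Lemma~\ref{lem:eigen} and invoke the Absorption Identity to telescope the series. The bookkeeping in your final paragraph matches the paper's computation line for line.
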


\begin{proof}
  The monomials not in the ideal $\langle x_0, x_1, \dotsc, x_{\ell-1}
  \rangle^i$ form a $\kk$-vector space basis for $M$; see Theorem~15.3 in
  \cite{Eisenbud}.  Since these basis elements in degree $j$ are the disjoint
  union of monomials in $\kk[x_0,x_1, \dotsc, x_{\ell-1}]_{k} \cdot
  \kk[x_{\ell}, x_{\ell+1}, \dotsc, x_{n}]_{j-k}$ where $0 \leq k \leq i-1$,
  we have
  \[
  \sum\nolimits_{j} h_{M}(j) t^j = (1-t)^{\ell-1-n} \sum_{k = 0}^{i-1}
  \binom{\ell-1+k}{k} t^k \, ,
  \]
  so $h_M \in V_{n,a}$ when $i-1+\ell-1 \leq a+n$.  Combining
  Lemma~\ref{lem:eigen} with the Absorption Identity, we obtain
  \begin{align*}
    T \left[ \sum\nolimits_j h_M(j) t^j \right] &= \left( (n+1) -
      (1-t)\frac{d}{dt} \right) \left[ (1-t)^{\ell-1-n} \sum_{k = 0}^{i-1}
      \binom{\ell-1+k}{k} t^k  \right] \\
    &= \ell (1-t)^{\ell-1-n} \sum_{k=0}^{i-1} \binom{\ell-1+k}{k} t^{k} -
    (1-t)^{\ell-n} \sum_{k=1}^{i-1} k \binom{\ell-1+k}{k} t^{k} \\
    &= (1-t)^{\ell-1-n} \left( \ell + \sum_{k=1}^{i-1} (\ell+k)
      \binom{\ell-1+k}{k} t^k - \sum_{k=0}^{i-2} (k+1) \binom{\ell+k}{k+1} t^k
    \right) \\
    &= (1-t)^{\ell-1-n} (\ell-1+i) \binom{\ell+i-2}{i-1} t^{i-1} = i
    \binom{\ell-1+i}{i} t^{i-1} (1-t)^{\ell-1-n} \, . \qedhere
  \end{align*}
\end{proof}

We concluded our trilogy of lemmata with an elementary positivity result.

\begin{lemma}
  \label{lem:pos}
  Any polynomial $f \in \QQ[s]$ of degree $r$ with $r$ distinct negative
  integer roots and a positive leading coefficient is a non-negative
  $\QQ$-linear combination of the polynomials $\binom{s+k}{k}$ for $0 \leq k
  \leq r$.
\end{lemma}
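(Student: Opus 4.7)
The plan is to prove the lemma by induction on the degree $r$. Writing $f(s) = c(s+a_1)(s+a_2)\cdots(s+a_r)$ with positive leading coefficient $c > 0$ and distinct positive integers $a_1 < a_2 < \dotsb < a_r$, I want to show that the unique expansion $f = \sum_{k=0}^{r} \alpha_k \binom{s+k}{k}$ in the (triangular) basis $\{\binom{s+k}{k}\}_{k=0}^r$ of $\QQ[s]_{\leq r}$ has all $\alpha_k \geq 0$. The base case $r = 0$ is immediate because $f = c = c\binom{s}{0}$.

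For the inductive step, I would peel off the \emph{largest} root by writing $f(s) = (s+a_r)\tilde{f}(s)$, where $\tilde{f}(s) := c(s+a_1)\cdots(s+a_{r-1})$ has degree $r-1$, positive leading coefficient $c$, and distinct negative integer roots. The induction hypothesis then gives $\tilde{f} = \sum_{k=0}^{r-1} \tilde{\alpha}_k \binom{s+k}{k}$ with every $\tilde{\alpha}_k \geq 0$. The key algebraic identity is a direct application of the Absorption Identity ($(k+1)\binom{s+k+1}{k+1} = (s+k+1)\binom{s+k}{k}$) used to rewrite
\[
(s+a_r)\binom{s+k}{k} = (k+1)\binom{s+k+1}{k+1} + (a_r - k - 1)\binom{s+k}{k}.
\]
Multiplying $\tilde{f}$ by $(s+a_r)$ term-by-term and reindexing yields explicit formulas $\alpha_0 = (a_r-1)\tilde{\alpha}_0$, $\alpha_k = k\,\tilde{\alpha}_{k-1} + (a_r-k-1)\tilde{\alpha}_k$ for $1 \leq k \leq r-1$, and $\alpha_r = r\,\tilde{\alpha}_{r-1}$.

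The only step where positivity could fail is the coefficient $(a_r - k - 1)$ appearing for $0 \leq k \leq r-1$, and this is where the choice to extract the largest root is crucial: because $a_1, a_2, \dotsc, a_r$ are $r$ distinct positive integers, we have $a_r \geq r$, so $a_r - k - 1 \geq r - k - 1 \geq 0$ whenever $k \leq r-1$. Hence every $\alpha_k$ is a non-negative $\QQ$-linear combination of the $\tilde{\alpha}_j$, and the induction closes. The main (and essentially only) obstacle is exactly this bookkeeping of signs, which is circumvented by always peeling off the maximal root so that the problematic coefficient is controlled by the inequality $a_r \geq r$.
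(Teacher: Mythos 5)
Your proof is correct and takes essentially the same route as the paper: induction on $r$, peeling off the maximal $a_r$ (equivalently, the smallest signed root, which the paper writes as $-r-\ell$ with $\ell\geq 0$), applying the Absorption Identity, and using $a_r\geq r$ to control the sign of the leftover coefficient $a_r-k-1$. The only cosmetic difference is that you state the base case at $r=0$ while the paper (apparently via a typo) writes $r=1$; the argument is otherwise identical.
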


\begin{proof}
  We proceed by induction on $r$.  If $r = 1$, then $f$ is the product of the
  leading coefficient of $f$ and the polynomial $\binom{s+0}{0}$ which
  establishes the base case.  Assume that $r > 1$. Since $f$ has $r$ distinct
  negative integer roots, the smallest root of $f$ equals $-r-\ell$ for some
  $\ell \in \ZZ$ satisfying $\ell \geq 0$.  It follows that $f(s) =
  (s+r+\ell)g(s)$ where $g \in \QQ[s]$ has degree $r-1$, $r-1$ distinct
  negative integer roots, and a positive leading coefficient.  The induction
  hypothesis implies that there exists non-negative $c_0, c_1, \dotsc, c_{r-1}
  \in \QQ$ such that 
  \[
  g(s) = c_0 \binom{s+0}{0} + c_1 \binom{s+1}{1} + \dotsb + c_{r-1}
  \binom{s+r-1}{r-1} \, .
  \]  
  Hence, the Absorption Identity yields
  \begin{align*}
    f(s) = (s+r+\ell) g(s) 
    &= \sum_{k=0}^{r-1} (s+r+\ell) c_k \binom{s+k}{k} \\
    &= \sum_{k=0}^{r-1} (s+k+1) c_k \binom{s+k}{k} + \sum_{k=0}^{r-1}
    (r-1-k+\ell) c_k \binom{s+k}{k} \\
     &= \sum_{k=0}^{r-1} (k+1) c_{k} \binom{s+k+1}{k+1} + \sum_{k=0}^{r-1}
     (r-1-k+\ell) c_k \binom{s+k}{k} \\
    &= \sum_{k=1}^{r} k c_{k-1} \binom{s+k}{k} + \sum_{k=0}^{r-1} (r-1-k+\ell)
    c_k \binom{s+k}{k}
  \end{align*}
  which completes the induction.
\end{proof}

We can now prove Theorem~\ref{thm:two} by showing that $T[Q_{n,a}] = P_{n,a}$.

\begin{proof}[Proof of Theorem~\ref{thm:two}]
  Theorem~\ref{thm:Macaulay} together with Lemma~\ref{lem:eigen} prove that
  $T[Q_{n,a}] \subseteq P_{n,a}$, so it suffices to show that all of the
  extreme rays of $P_{n,a}$ are images under $T$ of elements in $Q_{n,a}$.
  Lemma~\ref{lem:gens} establishes that images under $T$ of Hilbert functions
  for the artinian modules $S/\langle x_0, \dotsc, x_n \rangle^i$ where $1
  \leq i \leq a+1$ are scalar multiples of the polynomials $1, t, \dotsc,
  t^a$.  As in Proposition~\ref{pro:Pcone}, let $\hat{a} := a + \max(1,-a)$,
  fix an appropriate integer partition $\lambda$ or $\mu$, and let $F(t)$
  equal either
  \begin{xalignat*}{3}
    &\sum\limits_{j \geq \hat{a}} \Bigl( p_{\lambda}(j-\hat{a}) \prod_{\ell =
      1}^{\hat{a} - a - 1} (j+\ell) \Bigr) t^j & & \text{or} &\sum\limits_{j
      \geq \hat{a}} \Bigl( p_{\mu}(j-\hat{a}-1) \prod_{\ell = 0}^{\hat{a} - a -
      1} (j+\ell) \Bigr) t^j \, .
  \end{xalignat*}
  We need only exhibit a module $M$ such that the image of its Hilbert series
  under $T$ is a scalar multiple of $F(t)$.

  Since $b := \hat{a} + \lambda_1 + 2r$ is the largest root of
  $p_\lambda(j-\hat{a}-1)$, there is a unique decomposition $F(t) = F_1(t) +
  F_2(t)$ where $F_1(t)$ is a polynomial of degree less than $b$ and $F_2(t)$
  is a power series in which only the terms of degree larger than $b$ have
  nonzero coefficients.  It follows from Lemma~\ref{lem:gens} that the image
  of the Hilbert series an appropriate direct sum $M_1$ of the artinian
  modules $S/\langle x_0, \dotsc, x_n \rangle^i$ for $\hat{a} \leq i \leq b$
  maps to $c_1 \, F_1(t)$ for some positive $c_1 \in \ZZ$.  Thus, if there
  exists a module $M_2$ such that its Hilbert series maps to $c_2 \, F_2(t)$
  for some positive $c_2 \in \ZZ$, then the Hilbert series of the module $M =
  M_1^{(c_2)} \oplus M_2^{(c_1)}$ maps to $c_1c_2 \, F(t)$ under $T$.

  Establishing the existence of $M_2$ reduces by Lemma~\ref{lem:gens} to
  proving that $F_2(t)$ equals a finite non-negative $\QQ$-linear combination
  of the power series $t^{b+1} (1-t)^{-(k+1)} = t^{b+1} \sum_{j \geq 0}
  \binom{j+k}{k} t^{j}$ for $0 \leq k \leq n$.  By construction, we have
  $F_2(t) = t^{b+1} \sum_{j \geq 0} f_2(j) t^j$ where $f_2$ is a polynomial of
  degree $r \leq n$ with $r$ distinct negative integer roots and a positive
  leading coefficient.  Therefore, Lemma~\ref{lem:pos} completes the argument
  by showing that $f_2$ is a non-negative $\QQ$-linear combination of the
  polynomials $\binom{s+k}{k}$ for $0 \leq k \leq r \leq n$.
\end{proof}

\begin{remark}
  The proof of Theorem~\ref{thm:two} is constructive.  However, the procedure
  for creating a module $M$ that generates an extreme ray is rarely effective,
  because the number of cyclic summands used is so large.  Although each
  cyclic summand used has the simple form $S/\langle x_0, x_1, \dotsc,
  x_{\ell-1} \rangle^i$ for some $i \in \NN$ and $1 \leq \ell \leq n+1$, the
  Hilbert function of each individual summand does not belong to $V_{n,a}$.
\end{remark}

\begin{corollary}
  \label{cor:Qcone}
  The closed convex cone $Q_{n,a}$ is the intersection of the closed
  half-spaced defined by the inequalities $(n+j+1) h(j) \geq (j+1) h(j+1)$ for
  $j \in \NN$ and the limiting inequality which asserts that leading
  coefficient of the associated Hilbert polynomial is positive.  If $\hat{a}
  := a + \max(1,-a)$, then the extreme rays of $Q_{n,a}$ are generated by the
  Hilbert functions of the cyclic modules $S/\langle x_0, x_1, \dotsc, x_n
  \rangle^i$ for $1 \leq i \leq a+1$ and the inverse images under $T$ of the
  power series
  \begin{xalignat*}{2}
    &\sum_{j \geq \hat{a}} \Bigl( p_\lambda(j-\hat{a}) \prod_{\ell =
      1}^{\hat{a}-a-1} (j + \ell) \Bigr) t^j \, , & &\sum_{j \geq \hat{a}}
    \Bigl( p_\mu(j-\hat{a}-1) \prod_{\ell=0}^{\hat{a}-a-1} (j + \ell) \Bigr) t^j
  \end{xalignat*}
  where $\lambda$ ranges over all integer partitions with at most $\lfloor
  (n-\hat{a}+a)/2 \rfloor$ parts and $\mu$ ranges over all integer partitions
  with at most $\lfloor (n-\hat{a}+a-1)/2 \rfloor$ parts.
\end{corollary}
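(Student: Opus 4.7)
The plan is to deduce this corollary directly from Theorem~\ref{thm:two}, which asserts $T[Q_{n,a}] = P_{n,a}$, by transporting the description of $P_{n,a}$ back through $T^{-1}$. Thus the whole argument is an exercise in pushing a known convex-geometric description across an explicit linear isomorphism.

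First I would verify that $T$ restricts to a linear isomorphism on $V_{n,a}$. This is immediate from Lemma~\ref{lem:eigen}: on the triangular basis $\{(1-t)^i : -n \leq i \leq a\}$, the operator $T$ acts diagonally with eigenvalues $n+1+i$, each of which is strictly positive. Consequently $Q_{n,a} = T^{-1}[P_{n,a}]$ as closed convex cones inside $V_{n,a}$.

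Next I would translate the defining inequalities. An element $h \in V_{n,a}$ lies in $Q_{n,a}$ precisely when $T[h]$ lies in $P_{n,a}$, i.e.\ when $(T[h])(j) = (n+j+1)h(j) - (j+1)h(j+1) \geq 0$ for every $j \in \NN$, together with the limiting condition that the leading coefficient of the Hilbert polynomial of $T[h]$ is positive. A short bookkeeping computation, comparing $q_h(j)$ with $q_h(j+1)$ via a Taylor expansion, shows that if $q_h$ has degree $d-1$ with leading coefficient $c$, then $q_{T[h]}$ again has degree $d-1$ with leading coefficient $(n+1-d)c$; since every element of $V_{n,a}$ has Hilbert polynomial of degree at most $n-1$, the factor $n+1-d \geq 1$ is positive, so positivity of the leading coefficient is preserved by $T$. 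Hence the limiting inequality for $P_{n,a}$ pulls back to the analogous limiting inequality for $Q_{n,a}$, yielding the stated half-space description.

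For the extreme rays, I would use the fact that a linear isomorphism maps extreme rays of a closed convex cone bijectively to extreme rays of its image. From Proposition~\ref{pro:Pcone}, the extreme rays of $P_{n,a}$ split into the polynomial rays $1, t, \dotsc, t^a$ (present only when $a \geq 0$) and the two families of power series indexed by the partitions $\lambda$ and $\mu$. Applying Lemma~\ref{lem:gens} with $\ell = n+1$ yields $T[h_{S/\mathfrak{m}^i}] = i\binom{n+i}{i}\, t^{i-1}$, a positive scalar multiple of $t^{i-1}$; therefore the Hilbert functions of the cyclic modules $S/\mathfrak{m}^i$ for $1 \leq i \leq a+1$ generate the preimages of $1, t, \dotsc, t^a$. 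The remaining extreme rays of $Q_{n,a}$ are by construction exactly the inverse images under $T$ of the two power-series families, which is the form in which the corollary states them. The only real obstacle is the verification of the limiting inequality's invariance; once that is in place, everything else is a routine transfer through the isomorphism $T$, as the hard existential content (that each extreme ray of $P_{n,a}$ is the image of a genuine Hilbert function) has already been established in Theorem~\ref{thm:two}.
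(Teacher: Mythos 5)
Your proposal is correct and fills in, quite carefully, the details behind the paper's one-line proof that the corollary "follows immediately from Proposition~\ref{pro:Pcone} and Theorem~\ref{thm:two}"; in particular, the observations that $T|_{V_{n,a}}$ is a diagonalizable isomorphism with positive eigenvalues (Lemma~\ref{lem:eigen}), that linear isomorphisms of finite-dimensional spaces carry supporting hyperplanes and extreme rays bijectively, and that Lemma~\ref{lem:gens} with $\ell = n+1$ identifies the preimages of $1, t, \dotsc, t^a$ with the Hilbert functions of the cyclic modules $S/\mathfrak m^i$, are exactly the intended ingredients. Your additional verification that $T$ preserves the degree and the sign of the leading coefficient of the Hilbert polynomial, hence the limiting inequality, is a worthwhile detail that the paper elides.
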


\begin{proof}
  This follows immediately from Proposition~\ref{pro:Pcone} and
  Theorem~\ref{thm:two}.
\end{proof}

We end this section with some examples illustrating Corollary~\ref{cor:Qcone}.
When the dimension of the ambient vector space $V_{n,a}$ is small enough, we
can visualize the cone $Q_{n,a}$.

\begin{example}
  If $\dim(V_{n,a}) = 1$, then we have $n = -a$.  The cone $Q_{n,-n}$ is the
  positive $c_n$-axis generated by $(1-t)^{-n}$ which corresponds to the
  $S$-module $S/\langle x_0 \rangle$. \hfill $\diamond$
\end{example}

\begin{example}
  \label{exa:n=0}
  If $n = 0$, then we have $S = \kk[x_0]$ and $a \geq 0$.  Since the
  associated generating functions for elements of $V_{0,a}$ have the form
  $c_{-a} t^a + c_{-a+1} t^{a-1} + \dotsb + c_{-1} t + c_0$, the linear
  half-spaces defining $Q_{0,a}$ are $c_{-j} \geq c_{-j-1}$ for $0 \leq j < a$
  and $c_{-a} \geq 0$.  The extreme rays are generated by $1 + t + \dotsb +
  t^{i-1}$ for $0 \leq i \leq a +1$ which corresponds to the $S$-module
  $S/\langle x_0 \rangle^{i}$.  In particular, $Q_{0,a}$ is a simplicial
  polyhedral cone. \hfill $\diamond$
\end{example}

\begin{example}
  If $\dim(V_{n,a}) = 2$, then we have $a = -n+1$.  The case $n = 0$ is
  described in Example~\ref{exa:n=0}, so we may assume that $n \geq 1$.  Since
  we have
  \[
  \frac{c_{n-1}}{(1-t)^{n-1}} + \frac{c_n}{(1-t)^{n}} = \sum_{j \in \NN}
  \Biggl( c_{n-1} \binom{n+j-2}{n-2} + c_n \binom{n+j-1}{n-1} \Biggr) t^j
  \] 
  the linear half-spaces defining $Q_{n,-n+1}$ are $2(n-1)c_{n-1} +
  (n+j-1)c_{n} \geq 0$ for $j \in \NN$.  In this degenerate case, the two
  linear half-spaces $2c_{n-1} + c_{n} \geq 0$ and $c_n \geq 0$ coming from $j
  = 0$ and $j = \infty$ suffice.  The extreme rays are generated by
  $(1-t)^{-n+1}$ and $-(1-t)^{-n+1} + 2(1-t)^{-n}$ which correspond to the
  $S$-modules $S/\langle x_0,x_1 \rangle$ and $S/ \langle x_0 \rangle^2$.
  Once again, $Q_{n,-n+1}$ is a simplicial polyhedral cone.

  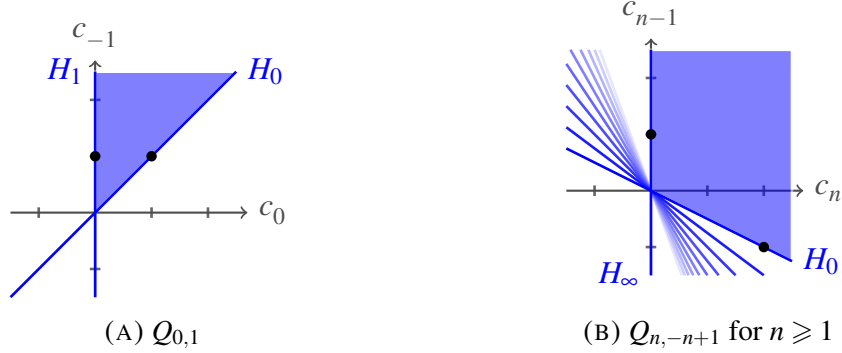
\begin{figure}[ht]
    \centering
    \begin{subfigure}[b]{0.3\textwidth}
      \centering
      \begin{tikzpicture}[x=0.75cm, y=0.75cm, line width=1pt]
        \foreach \x in {-1,1,2} {
          \draw[color=white!40!black] (\x, 2pt) -- (\x, -2pt);
        }
        \foreach \y in {-1,1,2} {
          \draw[color=white!40!black] (2pt,\y) -- (-2pt, \y);
        }
        \draw[color=white!30!black, thick, ->] (-1.5,0) -- (2.7,0) node[right]
        {$c^{}_0$};
        \draw[color=white!30!black, thick, ->] (0,-1.5) -- (0,2.7) node[above]
        {$c^{}_{-1}$};
        \draw[color=white, fill=blue, opacity=0.5] (0,0) -- (2.5,2.5) --
        (0,2.5) -- cycle;
        \draw[color=blue] (-1.5,-1.5) -- (2.5,2.5) node[right] {$H_0$};
        \draw[color=blue] (0,-1.5) -- (0,2.5) node[left] {$H_1$};
        \node[circle,fill=black,inner sep=1.4pt] () at (1,1) {};
        \node[circle,fill=black,inner sep=1.4pt] () at (0,1) {};
      \end{tikzpicture}
      \caption{$Q_{0,1}$}
    \end{subfigure}
    \hspace{6em}
    \begin{subfigure}[b]{0.3\textwidth}
      \centering
      \begin{tikzpicture}[x=0.75cm, y=0.75cm, line width=1pt]
        \foreach \x in {-1,1,2} {
          \draw[color=white!40!black] (\x, 2pt) -- (\x, -2pt);
        }
        \foreach \y in {-1,1,2} {
          \draw[color=white!40!black] (2pt,\y) -- (-2pt, \y);
        }
        \draw[color=white!30!black, thick, ->] (-1.5,0) -- (2.7,0) node[right]
        {$c^{}_{n}$};
        \draw[color=white!30!black, thick, ->] (0,-1.5) -- (0,2.7) node[above]
        {$c^{}_{n-1}$};
        \draw[color=white, fill=blue, opacity=0.5] (0,0) -- (2.5,-1.25) --
        (2.5,2.5) -- (0, 2.5) -- cycle;
        \draw[color=blue!10!white] (.545455,-1.5) -- (-.909091,2.5)
        node[right] {}; 
        \draw[color=blue!20!white] (.6,-1.5) -- (-1,2.5) node[right] {}; 
        \draw[color=blue!30!white] (.666667,-1.5) -- (-1.11111,2.5)
        node[right] {}; 
        \draw[color=blue!40!white] (.75,-1.5) -- (-1.25,2.5) node[right]
        {}; 
        \draw[color=blue!50!white] (.857143,-1.5) -- (-1.42857,2.5)
        node[right] {}; 
        \draw[color=blue!60!white] (1,-1.5) -- (-1.5, 2.25) node[right] {}; 
        \draw[color=blue!70!white] (1.2,-1.5) -- (-1.5, 1.875) node[right]
        {}; 
        \draw[color=blue!80!white] (1.5,-1.5) -- (-1.5, 1.5) node[right]
        {}; 
        \draw[color=blue!90!white] (2,-1.5) -- (-1.5, 1.125) node[right]
        {}; 
        \draw[color=blue] (-1.5,0.75) -- (2.5,-1.25) node[right] {$H_0$};
        \draw[color=blue] (0,2.5) -- (0,-1.5) node[left] {$H_{\infty}$};
        \node[circle,fill=black,inner sep=1.4pt] () at (0,1) {};
        \node[circle,fill=black,inner sep=1.4pt] () at (2,-1) {};
      \end{tikzpicture}
      \caption{$Q_{n,-n+1}$ for $n \geq 1$}
    \end{subfigure}
    \caption{Cones of Hilbert functions when $\dim(V_{n,a}) = 2$.}
    \label{fig:dim2}
  \end{figure}

  In Figure~\ref{fig:dim2}, the supporting hyperplanes $H_j$ are represented
  by blue lines (that fade to white as $j$ increase), the cone is represented
  by the translucent blue region, and the generators of the extreme rays are
  represented by small black circles. \hfill $\diamond$
\end{example}

\begin{example}
  \label{exa:Q_{3,-1}}
  If $n = 3$ and $a = -1$, then $\dim(V_{3,-1}) = 3$.  Since we have
  \begin{align*}
    \frac{c_1}{(1-t)} + \frac{c_2}{(1-t)^2} + \frac{c_3}{(1-t)^3} &= \sum_{j
      \in \NN} \Biggl( c_1 \binom{j}{0} + c_2 \binom{j+1}{1} + c_3
    \binom{j+2}{2} \Biggr) t^j \, ,
  \end{align*}
  the linear half-spaces defining $Q_{3,-1}$ are
  \begin{align*}
    (3+j+1)h(j) -(j+1)h(j+1) = 3 c_1 + 2(j+1) c_2 +
    \tfrac{1}{2}(j+1)(j+2) c_3 &\geq 0  \quad \text{for $j \geq 0$.}
  \end{align*}
  To visualize this closed convex cone, we intersect with the hyperplane $c_1
  + c_2 + c_3 = 1$; for a cyclic module, we have $h(0) = 1$.  Points in this
  cross-section are determined by the coordinates $(c_2, c_1)$, and the linear
  half-spaces in these coordinates are
  \[
  H_j : (j-1)(j+4) c_1 +
  (j-2)(j+1) c_2 - (j+1)(j+2) \leq 0 \quad \text{for $j \geq 0$.}  
  \]  
  As $j \to \infty$, we also obtain $H_{\infty} : c_1 + c_2 -1 \leq 0$.  In
  Figure~\ref{fig:3-1}, the supporting hyperplanes corresponding to $H_j$ are
  represented by blue lines (that fade to white as $j$ increases) and the
  cross-section of the cone is represented by the translucent blue region.

  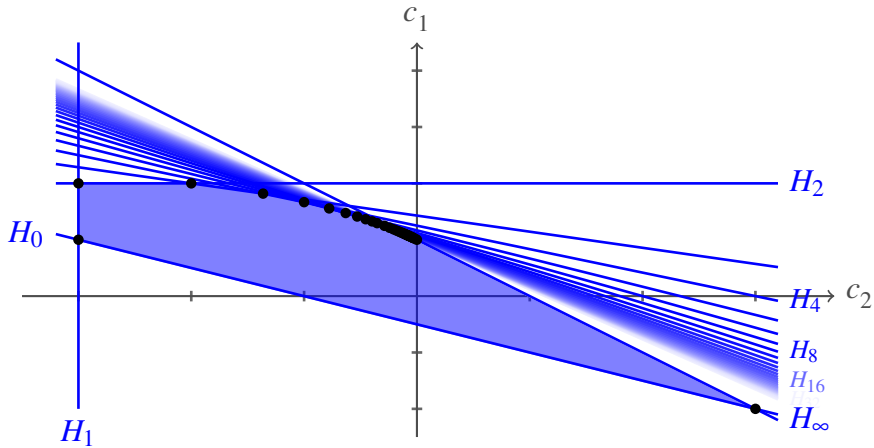
\begin{figure}[ht]
    \begin{tikzpicture}[x=1.5cm, y=0.75cm, line width=1pt]
      \foreach \x in {-3,-2,-1,1,2,3} {
        \draw[color=white!40!black] (\x, 2pt) -- (\x, -2pt);
      }
      \foreach \y in {-2,-1,1,2,3,4} {
        \draw[color=white!40!black] (2pt,\y) -- (-2pt, \y);
      }
      \draw[color=white!30!black, thick, ->] (-3.5,0) -- (3.7,0) node[right]
      {$c^{}_2$};
      \draw[color=white!30!black, thick, ->] (0,-2.5) -- (0,4.5) node[above]
      {$c^{}_1$};
      \draw[color=blue, fill=blue, opacity=0.5] (3,-2) -- (-3,1) -- (-3,2) --
      (-2,2) -- (-1.36364,1.81818) -- (-1,1.66667) -- (-.777778,1.55556) --
      (-.631579,1.47368) -- (-.529412,1.41176) -- (-.454545,1.36364) --
      (-.39759,1.3253) -- (-.352941,1.29412) -- (-.317073,1.26829) --
      (-.287671,1.24658) -- (-.263158,1.22807) -- (-.242424,1.21212) --
      (-.22467,1.19824) -- (-.209302,1.18605) -- (-.195876,1.17526) --
      (-.184049,1.16564) -- (-.173554,1.15702) -- (-.164179,1.14925) --
      (-.155756,1.14221) -- (-.148148,1.1358) -- (-.141243,1.12994) --
      (-.134948,1.12457) -- (-.129187,1.11962) -- (-.123894,1.11504) --
      (-.119015,1.11081) -- (-.114504,1.10687) -- (-.11032,1.1032) --
      (-.10643,1.09978) -- (-.102804,1.09657) -- (-.0994152,1.09357) --
      (-.096242,1.09074) -- (-.0932642,1.08808) -- (-.0904645,1.08557) --
      (-.0878274,1.0832) -- (-.0853392,1.08096) -- (-.0829876,1.07884) --
      (-.0807617,1.07682) -- (-.0786517,1.07491) -- (-.0766488,1.07308) --
      (-.0747452,1.07135) -- (-.0729335,1.06969) -- (-.0712074,1.06811) --
      (-.0695609,1.0666) -- (-.0679887,1.06516) -- (-.0664858,1.06377) --
      (-.0650477,1.06245) -- (-.0636704,1.06117) -- (-.0623501,1.05995) --
      (-.0610834,1.05878) -- (-.059867,1.05765) -- (-.058698,1.05656) --
      (-.0575737,1.05552) -- (-.0564916,1.05451) -- (-.0554493,1.05354) --
      (-.0544448,1.0526) -- (-.0534759,1.05169) -- (-.0525409,1.05082) --
      (-.051638,1.04997) -- (-.0507655,1.04915) -- (-.049922,1.04836) --
      (-.049106,1.0476) -- (-.0483163,1.04685) -- (-.0475515,1.04613) --
      (-.0468105,1.04543) -- (-.0460922,1.04476) -- (-.0453956,1.0441) --
      (-.0447197,1.04346) -- (-.0440636,1.04284) -- (-.0434265,1.04224) --
      (-.0428076,1.04165) -- (-.042206,1.04108) -- (-.041621,1.04053) --
      (-.0410521,1.03999) (0,-1) -- cycle;
      \draw[color=white,fill=white] (0,1) -- (3.2,-1.83333) -- (3.2,-2.2) --
      cycle; \draw[color=blue!05!white] (-3.2, 3.84409) -- (3.2, -1.83333)
      node[right] {\tiny $H_{32}$}; 
      \draw[color=blue!06!white] (-3.2, 3.8339) -- (3.2, -1.82248) node[right]
      {}; 
      \draw[color=blue!07!white] (-3.2, 3.82312) -- (3.2, -1.81095)
      node[right] {}; 
      \draw[color=blue!08!white] (-3.2, 3.81169) -- (3.2, -1.7987) node[right]
      {}; 
      \draw[color=blue!09!white] (-3.2, 3.79954) -- (3.2, -1.78565)
      node[right] {}; 
      \draw[color=blue!10!white] (-3.2, 3.7866) -- (3.2, -1.77171) node[right]
      {}; 
      \draw[color=blue!12!white] (-3.2, 3.7728) -- (3.2, -1.7568) node[right]
      {}; 
      \draw[color=blue!15!white] (-3.2, 3.75805) -- (3.2, -1.7408) node[right]
      {}; 
      \draw[color=blue!20!white] (-3.2, 3.74224) -- (3.2, -1.7236) node[right]
      {}; 
      \draw[color=blue!25!white] (-3.2, 3.72525) -- (3.2, -1.70505)
      node[right] {}; 
      \draw[color=blue!30!white] (-3.2, 3.70696) -- (3.2, -1.68498)
      node[right] {}; 
      \draw[color=blue!35!white] (-3.2, 3.6872) -- (3.2, -1.6632) node[right]
      {}; 
      \draw[color=blue!40!white] (-3.2, 3.66579) -- (3.2, -1.63947)
      node[right] {}; 
      \draw[color=blue!45!white] (-3.2, 3.64251) -- (3.2, -1.61353)
      node[right] {}; 
      \draw[color=blue!50!white] (-3.2, 3.61711) -- (3.2, -1.58503)
      node[right] {}; 
      \draw[color=blue!55!white] (-3.2, 3.58929) -- (3.2, -1.55357)
      node[right] {}; 
      \draw[color=blue!60!white] (-3.2, 3.55867) -- (3.2, -1.51867)
      node[right] {\scriptsize $H_{16}$}; 
      \draw[color=blue!65!white] (-3.2, 3.52481) -- (3.2, -1.4797) node[right]
      {}; 
      \draw[color=blue!70!white] (-3.2, 3.48718) -- (3.2, -1.4359) node[right]
      {}; 
      \draw[color=blue!75!white] (-3.2, 3.4451) -- (3.2, -1.38627) node[right]
      {}; 
      \draw[color=blue!80!white] (-3.2, 3.39773) -- (3.2, -1.32955)
      node[right] {}; 
      \draw[color=blue!85!white] (-3.2, 3.344) -- (3.2, -1.264) node[right]
      {};
      \draw[color=blue!90!white] (-3.2, 3.28254) -- (3.2, -1.1873) node[right]
      {}; 
      \draw[color=blue!95!white] (-3.2, 3.21154) -- (3.2, -1.09615) node[right]
      {}; 
      \draw[color=blue] (-3.2, 3.12857) -- (3.2, -.985714) node[right]
      {\footnotesize $H_8$}; 
      \draw[color=blue] (-3.2, 3.0303) -- (3.2, -.848485) node[right] {}; 
      \draw[color=blue] (-3.2, 2.912) -- (3.2, -.672) node[right] {}; 
      \draw[color=blue] (-3.2, 2.76667) -- (3.2, -.433333) node[right] {}; 
      \draw[color=blue] (-3.2, 2.58333) -- (3.2, -.0833333) node[right] 
      {\small $H_4$}; 
      \draw[color=blue] (-3.2, 2.34286) -- (3.2, .514286) node[right] {}; 
      \draw[color=blue] (-3.2, 2) -- (3.2, 2) node[right] {$H_2$}; 
      \draw[color=blue] (-3,4.5) -- (-3,-2.0) node[below] {$H_1$}; 
      \draw[color=blue] (3.2, -2.1) -- (-3.2, 1.1) node[left] {$H_0$}; 
      \draw[color=blue] (-3.2,4.2) -- (3.2,-2.2) node[right] {$H_{\infty}$};
      \node[circle,fill=black,inner sep=1.4pt] () at (3,-2) {};
      \node[circle,fill=black,inner sep=1.4pt] () at (0,1) {};
      \node[circle,fill=black,inner sep=1.4pt] () at (-3,1) {};
      \node[circle,fill=black,inner sep=1.4pt] () at (-3,2) {};
      \node[circle,fill=black,inner sep=1.4pt] () at (-2,2) {};
      \node[circle,fill=black,inner sep=1.4pt] () at (-1.36364,1.81818) {};
      \node[circle,fill=black,inner sep=1.4pt] () at (-1,1.66667) {};
      \node[circle,fill=black,inner sep=1.4pt] () at (-.777778,1.55556) {};
      \node[circle,fill=black,inner sep=1.4pt] () at (-.631579,1.47368) {};
      \node[circle,fill=black,inner sep=1.4pt] () at (-.529412,1.41176) {};
      \node[circle,fill=black,inner sep=1.4pt] () at (-.454545,1.36364) {};
      \node[circle,fill=black,inner sep=1.4pt] () at (-.39759,1.3253) {};
      \node[circle,fill=black,inner sep=1.4pt] () at (-.352941,1.29412) {};
      \node[circle,fill=black,inner sep=1.4pt] () at (-.287671,1.24658) {};
      \node[circle,fill=black,inner sep=1.4pt] () at (-.242424,1.21212) {};
      \node[circle,fill=black,inner sep=1.4pt] () at (-.209302,1.18605) {};
      \node[circle,fill=black,inner sep=1.4pt] () at (-.184049,1.16564) {};
      \node[circle,fill=black,inner sep=1.4pt] () at (-.164179,1.14925) {};
      \node[circle,fill=black,inner sep=1.4pt] () at (-.148148,1.1358) {};
      \node[circle,fill=black,inner sep=1.4pt] () at (-.134948,1.12457) {};
      \node[circle,fill=black,inner sep=1.4pt] () at (-.123894,1.11504) {};
      \node[circle,fill=black,inner sep=1.4pt] () at (-.114504,1.10687) {};
      \node[circle,fill=black,inner sep=1.4pt] () at (-.10643,1.09978) {};
      \node[circle,fill=black,inner sep=1.4pt] () at (-.0994152,1.09357)
      {};
      \node[circle,fill=black,inner sep=1.4pt] () at (-.0932642,1.08808)
      {};
      \node[circle,fill=black,inner sep=1.4pt] () at (-.0878274,1.0832) {};
      \node[circle,fill=black,inner sep=1.4pt] () at (-.0829876,1.07884)
      {};
      \node[circle,fill=black,inner sep=1.4pt] () at (-.0786517,1.07491)
      {};
      \node[circle,fill=black,inner sep=1.4pt] () at (-.0747452,1.07135)
      {};
      \node[circle,fill=black,inner sep=1.4pt] () at (-.0712074,1.06811)
      {};
      \node[circle,fill=black,inner sep=1.4pt] () at (-.0679887,1.06516)
      {};
      \node[circle,fill=black,inner sep=1.4pt] () at (-.0650477,1.06245)
      {};
      \node[circle,fill=black,inner sep=1.4pt] () at (-.0623501,1.05995)
      {};
      \node[circle,fill=black,inner sep=1.4pt] () at (-.051638,1.04997) {};
      \node[circle,fill=black,inner sep=1.4pt] () at (-.0440636,1.04284)
      {};
      \node[circle,fill=black,inner sep=1.4pt] () at (-.0384255,1.03749)
      {};
      \node[circle,fill=black,inner sep=1.4pt] () at (-.0340657,1.03333)
      {};
      \node[circle,fill=black,inner sep=1.4pt] () at (-.0305939,1.02999)
      {};
      \node[circle,fill=black,inner sep=1.4pt] () at (-.0151492,1.015) {};
    \end{tikzpicture}
    \caption{Cyclic cross-section of $Q_{3,-1}$}
    \label{fig:3-1}
  \end{figure}

  The extreme points of the cross-section are represented by small black
  circles in Figure~\ref{fig:3-1}.  More precisely, the supporting hyperplanes
  corresponding to $H_i$ and $H_{i+1}$ meet at the point $(c_2,c_1) =
  \tfrac{3}{i^2+2} \left( -(i+2), \frac{1}{3} (i+1)(i+2) \right)$ for $i \geq
  0$, and the supporting hyperplanes corresponding to $H_0$ and $H_{\infty}$
  meet at the point $(c_2,c_1) = (3,-2)$.  As $i \to \infty$, we also obtain
  the point $(0,1)$.  Hence, the extreme rays of $Q_{3,-1}$ are generated by
  \begin{xalignat*}{4}
    & \frac{1}{(1-t)} \, , &&-\frac{2}{(1-t)} + \frac{3}{(1-t)^2} \, , &&
    \text{and} && \frac{3}{i^2+2} \left( \frac{(i+1)(i+2)}{3(1-t)} -
      \frac{(i+2)}{(1-t)^2} + \frac{2}{(1-t)^3} \right) \, .
  \end{xalignat*}
  Moreover, these extreme rays correspond to integer partitions with at most
  $1$-part:
  \begin{xalignat*}{3}
    T \left[ \frac{1}{(1-t)} \right] &= \sum_{j \in \NN} t^j &
    &\longleftrightarrow &\lambda &= \varnothing \\
    T \left[ -\frac{2}{(1-t)} + \frac{3}{(1-t)^2} \right] &=
    \sum_{j \in \NN} j t^j & &\longleftrightarrow & \lambda &= \varnothing \\
    T \left[ \frac{(i+1)(i+2)}{3(1-t)} - \frac{(i+2)}{(1-t)^2} +
      \frac{2}{(1-t)^3} \right] &= \sum_{j \in \NN} (j-i)(j-i-1) t^j &
    &\longleftrightarrow &\lambda &= (i)  \, .
  \end{xalignat*}
  The cone $Q_{3,-1}$ is neither simplicial nor polyhedral.

  The minimal number of generators for the modules lying on the extreme rays
  is unbounded.  Specifically, by considering the linear term, we see that the
  smallest multiple of the rational function 
  \[
  \frac{3}{i^2+2} \left( \frac{(i+1)(i+2)}{3(1-t)} - \frac{(i+2)}{(1-t)^2} +
    \frac{2}{(1-t)^3} \right)
  \] 
  that could be the Hilbert function of a module has constant term $i^2+2$.
  Hence, any module that corresponds to a point on this ray has at least
  $i^2+2$ generators in degree $0$. \hfill $\diamond$
\end{example}

\section{Modules with bounded regularity}
\label{sec:regularity}

\noindent
This final section examines our third cone of Hilbert functions.  By bounding
the Castelnuovo--Mumford regularity of $S$-modules, we provide an alternative
condition which guarantees that the Hilbert functions lie in a
finite-dimensional vector space.  To enumerate the supporting hyperplanes and
extreme rays for the cone of Hilbert functions with bounded regularity, we use
the natural projection from the cone of Betti tables.

For a finitely generated $\NN$-graded $S$-module $M$, the graded Betti numbers
are defined by $\beta_{i,j}(M) := \dim_{\kk} \bigl( \Tor_i(M,\kk)_j \bigr)$,
and we have $\beta_{i,j}(M) = 0$ for all $i > n+1$; see Theorem~1.1 in
\cite{EisenbudSyz}.  The graded Betti numbers of $M$ determine its Hilbert
series via the formula
\begin{equation}
  \label{eq:bettiHS}
  \sum_{j \in \NN} h_{M}(j) t^j = \frac{\sum_{j \in \NN}
    \sum_{i=0}^{n+1} \beta_{i,j}(M) t^j}{(1-t)^{n+1}} \, .
\end{equation}
The Betti table $\beta(M)$ is the matrix in $\bigoplus_{j = - \infty}^{\infty}
\bigoplus_{i=0}^{n+1} \QQ$ whose entry in the $j$-th row and $i$-th column is
$\beta_{i,i+j}(M)$; see Proposition~1.9 in \cite{EisenbudSyz} for an
explanation of this convention.  The Castelnuovo--Mumford regularity is the
largest index of a nonzero row in the Betti table $\beta(M)$ or equivalently
$\reg(M) := \max \{ j \in \ZZ : \beta_{i,i+j}(M) \neq 0 \}$.  The Hilbert
function $h_M(j)$ equals the Hilbert polynomial $q_M(j)$ for all $j >
\reg(M)$; see Theorem~4.2 in \cite{EisenbudSyz}.  Hence, if $m \geq \reg(M)$,
then Equation~\eqref{eq:bettiHS} shows that $h_M \in V_{n+1,m}$.

\begin{definition}
  \label{def:regularity}
  Let $R_{n,m}$ denote the closed convex hull in $V_{n+1,m}$ of the Hilbert
  functions of finitely generated $\NN$-graded $S$-modules that are generated
  in degree zero, have no free summands, and have Castelnuovo--Mumford
  regularity at most $m$.  Informally, we say that $R_{n,m} \subset V_{n+1,m}$
  is the \define{cone of Hilbert functions with bounded regularity}.
\end{definition}

As in Section~\ref{sec:a-invariant}, let $q_h \in \QQ[s]$ be the Hilbert
polynomial of the sequence $h \in R_{n,m}$.  The backward difference operator
$\nabla \colon \QQ[s] \to \QQ[s]$ is defined by $\nabla q (s) := q(s) -
q(s-1)$ where $q \in \QQ[s]$.  We write $\nabla^i$ for the $i$-fold
composition of $\nabla$ with itself.

\begin{proof}[Proof of Theorem~\ref{thm:three}]
  We first show that the cone $R_{n,m}$ is generated by the Hilbert functions
  of the cyclic modules appearing in the list \eqref{eq:cyclic}.  Our indirect
  proof exploits the Betti tables for certain modules over the smaller
  polynomial ring $S' := S/\langle x_n \rangle = \kk[x_0,x_1, \dotsc,
  x_{n-1}]$.

  Let $\Psi$ be the linear map from the rational vector space of Betti tables
  for $S$-modules to the rational vector space of Betti tables for
  $S'$-modules defined by $\bigl( \Psi(\beta) \bigr)_{i,j} := j
  \beta_{i+1,j}$; compare with Definition~4.5 in \cite{Soderberg}.  Following
  Definition~2.1 in \cite{BS}, the pure Betti table with degree sequence $d_0
  < d_1 < \dotsb < d_e$ satisfies $\beta_{i,d_i} = \prod\nolimits_{j \neq i}
  \frac{1}{|d_j-d_i|}$ for $0 \leq i \leq e$.  Hence, the map $\Psi$ sends the
  pure Betti table with degree sequence $0 < d_1 < d_2 < \dotsb < d_e$ to the
  pure Betti table with degree sequence $d_1 < d_2 < \dotsb < d_e$.  Since
  Theorem~3.7 and Theorem~4.1 in \cite{BS} establish that the closed convex
  cones of Betti tables are generated by the pure Betti tables, the map $\Psi$
  induces a surjection from the closed convex cone of Betti tables for
  $S$-modules to the closed convex cone of Betti tables for
  $S'$-modules. Moreover, the kernel of $\Psi$ is generated by the Betti table
  for the free module $S$.  Therefore, the Betti tables for the modules
  associated to the generators of the cone $R_{n,m}$ correspond to the Betti
  tables for finitely generated $\NN$-graded $S'$-modules that are generated
  in degree at least $0$ and have regularity at most $m$.

  Consider a finitely generated $\NN$-graded $S'$-module $M'$ that is
  generated in degrees at least $0$ and has regularity at most $m$.  Any such
  module $M'$ has the same Hilbert function as the $S'$-module
  \begin{equation}
    \label{eq:M''}
    M'' := \bigoplus_{j=0}^{m-1} \kk(-j)^{\oplus h_{M'}(j)} \oplus M'_{\geq m}
  \end{equation}
  where the truncation $M'_{\geq m}$ equals $\bigoplus_{j \geq m} M'_{j}$.
  Proposition~1.1 and Theorem~1.2 in \cite{EG} establish that $M'_{\geq m}$
  has a linear resolution such that $\beta_{i,i+m}(M'_{\geq m}) =
  \beta_{i,i+m}(M'')$ for $0 \leq i \leq n$.  The Koszul complex is also
  linear, in addition to being the minimal free resolution of the $S'$-module
  $\kk$, so it follows that $\beta_{i,i+j}(M'') = h_{M'}(j) \binom{n}{i}$ for
  $0 \leq j < m$.  Since the Betti table of a direct sum is the sum of Betti
  tables and the relation \eqref{eq:bettiHS} holds, we deduce that the Hilbert
  function of $M'$ can be expressed as a non-negative integer combination of
  Hilbert functions of modules with linear resolutions.

  As each cyclic modules appearing in the list \eqref{eq:cyclic} is the
  quotient of $S$ by a Borel-fixed ideal, the minimal free resolution is given
  by an appropriate Eliahou-Kervaire resolution; see \S2.3 in \cite{MS}.  In
  particular, Theorem~2.18 in \cite{MS} implies that the map $\Psi$ sends
  $\beta(S/\langle x_0, x_1, \dotsc, x_\ell \rangle^d)$ to a pure resolution
  with degree sequence $d < d+1 < \dotsb < d+\ell$.  In other words, the image
  of $\beta(S/\langle x_0, x_1, \dotsc, x_\ell \rangle^d)$ is a Betti table of
  a linear resolution.  Taking the inverse image under $\Psi$ for our
  expression for the Hilbert function of $M'$, we conclude that each generator
  of the cone $R_{n,m}$ is a non-negative rational combination of the Hilbert
  functions of the cyclic modules appearing in the list \eqref{eq:cyclic}.

  We next describe the supporting hyperplanes to the cone $R_{n,m}$.  As in
  Proposition~\ref{pro:Pcone}, the Binomial Theorem establishes both $t^k =
  (1-t)^{-n-1} \sum_i \binom{n+1}{i} (-1)^i t^{k+i}$ for $0 \leq k \leq m$ and
  $(1-t)^{-\ell}t^{m+1} = (1-t)^{-n-1} \sum_i \binom{n-\ell+1}{i} (-1)^i
  t^{i+m+1}$ for $1 \leq \ell \leq n+1$.  Hence, the rational functions $1,t,
  \dotsc, t^{m}, (1-t)^{-1}t^{m+1}, (1-t)^{-2} t^{m+1}, \dotsc, (1-t)^{-n-1}
  t^{m+1}$ form a triangular basis for $V_{n+1,m}$.  Let $(c_0, c_1, \dotsc,
  c_{m}, c_{-1}, c_{-2}, \dotsc, c_{-n-1})$ denote the coordinates of $h \in
  V_{n+1,m}$ with respect to this ordered basis.  Lemma~\ref{lem:gens} implies
  that the Hilbert series of the $S$-module $M_{n,i} := S/\langle x_0, x_1,
  \dotsc, x_n \rangle^{i}$ for $1 \leq i \leq m+1$ is
  \begin{align*}
    \sum_{j} h_{M_{n,i}}(j) t^j &= \sum_{k=0}^{i-1} \binom{n+k}{k} t^k =
    \sum_{k=0}^{i-1} \binom{n+k}{n} t^k \, ,
  \end{align*}
  so the coordinates are $c_k = \binom{n+k}{k}$ for $0 \leq k \leq i-1$ and
  $c_k = 0$ for $i \leq k \leq m$ or $k < 0$. Similarly, the Hilbert series of
  $M_{\ell,m+1} := S/\langle x_0, x_1, \dotsc, x_{n+1-\ell} \rangle^{m+1}$ for
  $1 \leq \ell \leq n+1$ is
  \begin{align*}
    \sum_{j} h_{M_{\ell,m+1}}(j) t^j &= (1-t)^{1-\ell} \sum_{k=0}^{m}
    \binom{n+1-\ell+k}{k} t^k \\ &= \left( (1-t)^{-\ell} \sum_{k=0}^{m}
      \binom{n-\ell+k}{k} t^k \right) - \binom{n+1-\ell+m}{m}
    (1-t)^{-\ell} t^{m+1} \\
    &= \sum_{j} h_{M_{\ell+1,m+1}}(j) t^j - \binom{n+1-\ell+m}{m}
    (1-t)^{-\ell} t^{m+1} \, ,
  \end{align*}
  so the coordinates are $c_k = \binom{n+k}{n}$ for $0 \leq k \leq m$, $c_{-k}
  = \binom{n+1-k+m}{m}$ for $1 \leq k \leq \ell$, and $c_{-k} = 0$ for $\ell
  \leq k \leq n+1$.  Since the coordinate vectors are all truncations of the
  coordinate vector of $h_{M_{n+1,m+1}} \in R_{n,m}$, the inequalities
  defining this cone are simply 
  \begin{xalignat*}{2}
    \frac{c_k}{\binom{n+k}{n}} &\geq
    \frac{c_{k+1}}{\binom{n+k+1}{n}} \quad \text{for $0 \leq k \leq
      m-1$,} &
    \frac{c_m}{\binom{n+m}{n}} &\geq
    \frac{c_{-1}}{\binom{n+m}{m}} \, ,\\
    \frac{c_{-k}}{\binom{n+1-k+m}{m}} &\geq
    \frac{c_{-k-1}}{\binom{n-k+m}{m}} \quad \text{for $1 \leq k \leq
      n$, and} &
    c_{-n-1} &= 0 \, .
  \end{xalignat*}
  The equation $c_{-n-1} = 0$ implies that $R_{n,m} \subset V_{n,m}$.
 
  To complete the proof, we explicitly relate the coordinates to the Hilbert
  function.  For $h \in V_{n+1,m}$, we have
  \[
  \sum_{j} h(j) t^j = c_0 + c_1 t + \dotsb + c_m t^m + c_{-1}
  \frac{t^{m+1}}{(1-t)} + c_{-2} \frac{t^{m+1}}{(1-t)^2} + \dotsb + c_{-n-1}
  \frac{t^{m+1}}{(1-t)^{n+1}} \, ,
  \]
  so $h(j) = c_j$ for $0 \leq j \leq m$ and the Generalized Binomial Theorem
  shows that
  \[
  q_h(s) = \sum\limits_{k=1}^{n+1} c_{-k} \binom{k+s-m-2}{k-1} \in
  \QQ[s] \, .
  \]  
  The Addition Formula for binomial coefficients yields $\nabla^i q_h (s) =
  \sum\limits_{k=i+1}^{n+1} c_{-k} \binom{k+s-m-2-i}{k-1-i}$ from which we
  obtain $\nabla^i q_h (m) = c_{-i-1}$ for $0 \leq i \leq n$.  Using the
  Absorption Identity, the inequalities defining the cone $R_{n,m} \subset
  V_{n,m}$ become
  \begin{align*}
    (n + j +1) h(j) & \geq (j+1) h(j+1) && \text{for $0 \leq j \leq m-1$}
    \\
    h(m) & \geq q_h(m) && \text{and,} \\
    (n+1-i) \nabla^i q_h (m) &\geq (n+m+1-i) \nabla^{i+1} q_h (m) &&
    \text{for $0 \leq i \leq n-1$.}  \qedhere
  \end{align*}
\end{proof}  

\begin{remark}
  The coefficients appearing the supporting hyperplanes of $R_{n,m}$ have an
  intrinsic interpretation in terms of the Hilbert function of the underlying
  ring.  Specifically, the Hilbert polynomial of $S$ is $q_S(s) =
  \binom{n+s}{n}$ and the Addition Formula yields $\nabla^i q_S(m) =
  \binom{n+m-i}{m}$, so $R_{n,m}$ is the intersection of the closed
  half-spaces given by the inequalities:
  \begin{xalignat*}{3}
    \frac{h(j)}{h_S(j)} &\geq \frac{h(j+1)}{h_S(j)} && \text{for $0 \leq j < m$,} &
    \frac{h(m)}{h_S(m)} &\geq \frac{q_h(m)}{q_S(m)} \, ,  \\
    \frac{\nabla^i q_h (m)}{\nabla^i q_S(m)} &\geq \frac{\nabla^{i+1} q_h
      (m)}{\nabla^{i+1} q_S(m)} && \text{for $0 \leq i < n$, and} & \nabla^n q_h(m)
    &=  0  \, .
  \end{xalignat*}
\end{remark}

\begin{remark}
  Corollary~\ref{cor:Qcone} and Theorem~\ref{thm:three}, together with
  Lemma~\ref{lem:eigen}, establish that $R_{n,m} \subseteq Q_{n,m}$.
  Moreover, the artinian cyclic modules $S/\langle x_0, x_1, \dotsc, x_n
  \rangle^i$ for $1 \leq i \leq m+1$ generate extreme rays in both cones.
  However, the simplicial cone $R_{n,m}$ is generally a proper subcone of
  $Q_{n,m}$.
\end{remark}

The techniques used in the proof of Theorem~\ref{thm:three} lead to
descriptions of other cones closed related to $R_{n,m}$.

\begin{remark}
  Restricting to modules of dimension at most $d$ and regularity at most $m$
  yields a subcone of $R_{n,m}$ generated by the Hilbert functions of the
  cyclic modules:
  \begin{equation*}
    \begin{split}
      \frac{S}{\langle x_0, x_1, \dotsc, x_{n} \rangle}, &\frac{S}{\langle x_0,
        x_1, \dotsc, x_{n} \rangle^2}, \dotsc, \frac{S}{\langle x_0, x_1,
        \dotsc, x_{n} \rangle^{m}}, \\
      &\frac{S}{\langle x_0, x_1, \dotsc, x_{n} \rangle^{m+1}},
      \frac{S}{\langle x_0, x_1, \dotsc, x_{n-1} \rangle^{m+1}}, \dotsc,
      \frac{S}{\langle x_0, x_1, \dotsc, x_{n-d} \rangle^{m+1}} \, .
    \end{split}
  \end{equation*}
  For the dual description, we need to add the equalities 
  \[
  \nabla^{d} q_h(m) = \nabla^{d+1} q_h(m) = \dotsb = \nabla^{n} q_h(m) = 0 \,
  .
  \]  
  Similarly, one can describe the restriction to modules with projective
  dimension at most $\ell$ and regularity at most $m$ by relating it to
  $R_{\ell-1,m}$ via the backward difference operator $\nabla^{n+1-\ell}$.
\end{remark}

The techniques also yield explicit bounds for the Betti numbers of modules
with a fixed Hilbert function and bounded regularity.

\begin{proposition}
  \label{pro:BettiBounds}
  If the $S$-module $M$ is generated in degree zero, has no free summands, and
  has Castelnuovo--Mumford regularity at most $m$, then the Betti numbers are
  bounded by the inequalities
  \begin{align*}
    \beta_{i,i+j}(M) &\leq \frac{1}{i+j} \binom{n}{i-1} \Bigl( (n+1+j) h_M(j)
    - (j+1) h_M(j+1) \Bigr) 
    = \tfrac{1}{i+j} \tbinom{n}{i-1} \bigl( T[h_M] \bigr)(j) \\
    \intertext{for $0 \leq j < m$, $1 \leq i \leq n+1$, and} \beta_{i,i+m}(M)
    &\leq \frac{n+m+1}{i+m} \binom{n}{i-1} h_M(m) + \sum\limits_{k=1}^{i}
    (-1)^k \binom{n+1}{i-k} h_M(m+k)
  \end{align*}
  for $1 \leq i \leq n+1$.  Moreover, these bounds are sharp for some positive
  multiple of the Hilbert function $h_M$.
\end{proposition}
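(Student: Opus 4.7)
The strategy is to combine the Boij--S\"oderberg decomposition of Betti tables (already used in the proof of Theorem~\ref{thm:three}) with a direct calculation on pure diagrams. By Theorems~3.7 and 4.1 of \cite{BS}, we may write $\beta(M) = \sum_{\delta} c_\delta \pi(\delta)$ with non-negative rational coefficients, where each degree sequence $\delta = (0, d_1, \dotsc, d_e)$ satisfies $d_e - e \leq m$ (reflecting the bound on the regularity of $M$). Both sides of each inequality in the proposition are linear in the pair $(\beta(M), h_M)$, and $h_M = \sum_\delta c_\delta h_{\pi(\delta)}$, so it suffices to verify each inequality for a single pure diagram $\pi(\delta)$.

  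For a pure diagram $\pi(\delta)$, the Herzog--K\"uhl formula gives $\pi(\delta)_{i, d_i} = \prod_{\ell \neq i}|d_\ell - d_i|^{-1}$ (properly normalized), while Equation~\eqref{eq:bettiHS} expresses the Hilbert values $h_{\pi(\delta)}(j), h_{\pi(\delta)}(j+1), \dotsc, h_{\pi(\delta)}(j+i)$ in terms of the entries of $\delta$. The first inequality (for $0 \leq j < m$) reduces, after invoking the Binomial Theorem and the Absorption Identity, to an elementary binomial identity that becomes an equality precisely when $\delta = (0, j+1, j+2, \dotsc, j+n+1)$, i.e., when $\pi(\delta)$ is the pure diagram of $S/\mathfrak{m}^{j+1}$. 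The second inequality (for $j = m$) reduces similarly: the alternating tail $\sum_{k=1}^i (-1)^k \binom{n+1}{i-k}h(m+k)$ on the right captures the contribution of the positions $d_{i+1}, \dotsc, d_e$ of $\delta$ that lie above $i+m$, and equality holds exactly on the pure diagrams of the cyclic modules $S/\mathfrak{m}^{m+1}, S/\langle x_0, \dotsc, x_{n-1}\rangle^{m+1}, \dotsc, S/\langle x_0\rangle^{m+1}$ from \eqref{eq:cyclic}.

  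Combining the equality cases establishes sharpness: given $h_M \in R_{n,m}$, Theorem~\ref{thm:three} expresses $h_M$ as a non-negative rational combination of the Hilbert functions of the cyclic modules listed in \eqref{eq:cyclic}, and clearing denominators produces a positive integer multiple $k h_M$ realized as the Hilbert function of a direct sum of those cyclic modules; this direct sum attains every bound simultaneously. The main obstacle is the second step: verifying the two families of binomial identities on pure diagrams. Although the calculations are elementary, they require careful bookkeeping of the positions of $d_\ell$ relative to the fixed shifts $i+j$ and $i+m$, and amount to specialized hypergeometric identities indexed by the relative location of $d_i$ within $\delta$; the same bookkeeping ensures that the stray contributions with $d_\ell$ close to $d_i$ (for $\ell \neq i$) are absorbed by the coefficient $\frac{1}{i+j}\binom{n}{i-1}$ or $\frac{n+m+1}{i+m}\binom{n}{i-1}$, respectively.
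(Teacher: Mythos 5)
Your approach is genuinely different from the paper's. The paper does not decompose $\beta(M)$ into pure diagrams and check the inequalities on each; instead it invokes Theorem~1 of \cite{Hulett} to identify a single extremal module $M''$ (a direct sum of shifted residue fields plus a truncation, Equation~\eqref{eq:M''}) whose Betti numbers dominate those of every module with the same Hilbert function and regularity at most $m$, pulls this back via $\Psi^{-1}$ to $S$, and then explicitly evaluates the extremal Betti numbers by expanding $h_M$ over the Hilbert functions of the cyclic modules from the list \eqref{eq:cyclic} (which are precisely the pure diagrams of linear resolutions). The paper's computation is therefore an \emph{equality} on a single explicit family of linear pure diagrams, whereas your route requires verifying an \emph{inequality} on arbitrary pure diagrams $\pi(\delta)$ with gaps in the degree sequence --- a strictly harder combinatorial task, since it cannot simply read off the Koszul Betti numbers.

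And that harder task is exactly where the proposal stops. You concede that the ``two families of binomial identities on pure diagrams'' are not carried out, but this is not a peripheral detail: it is the entire content of the argument. For a general degree sequence $\delta = (0, d_1, \dotsc, d_e)$ with $d_e - e \leq m$, one must bound the Herzog--K\"uhl entry $\pi(\delta)_{i,d_i}$ by a linear functional of $h_{\pi(\delta)}$, with the bridge between the two running through the rational identity \eqref{eq:bettiHS}; nothing in the proposal makes this tractable, and describing it as ``elementary binomial identities'' that merely need ``careful bookkeeping'' understates the problem. A secondary inaccuracy: the claim that equality in the first bound holds ``precisely when $\delta = (0,j+1,\dotsc,j+n+1)$'' is false. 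For $n=2$, $\delta=(0,1,3)$, $i=1$, $j=0$, one computes $\pi(\delta)_{1,1} = \tfrac12 = \tfrac{1}{1}\tbinom{2}{0}\bigl(T[h_{\pi(\delta)}]\bigr)(0)$; in fact the $i=1$ bound is an equality on \emph{every} pure diagram with $d_1 = j+1$. This does not sink the strategy, but it shows the proposed ``identity'' is not as cleanly characterized as asserted, and in its current form the proposal is a plausible outline rather than a proof.
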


\begin{proof}
  Theorem~1 in \cite{Hulett} establishes that the module $M''$, defined in
  Equation~\eqref{eq:M''}, has the largest possible Betti numbers among all
  $S'$-modules with a given Hilbert function (up to scaling) and regularity at
  most $m$.  The matrix with respect to the standard basis of the linear map
  $\Psi$ has non-negative entries, so the Betti table $\Psi^{-1}\bigl(
  \beta(M'') \bigr)$ is maximal among all $S$-modules that are generated in
  degree zero, have no free summands, have regularity at most $m$, and have a
  given Hilbert function.  We compute $\Psi^{-1}\bigl( \beta(M'') \bigr)$ from
  the expansion of $h_M$ as a non-negative linear combination of the extreme
  rays.  Ordering the extreme rays as in the list \eqref{eq:cyclic}, the
  coefficients $\alpha_0, \alpha_1, \dotsc, \alpha_m, \alpha_{-1},
  \alpha_{-2}, \dotsc, \alpha_{-n}$ in the unique such expansion are
  \begin{xalignat*}{4}
    \alpha_j &= \frac{h_M(j)}{\binom{n+j}{n}} -
    \frac{h_M(j+1)}{\binom{n+j+1}{n}} && \text{for $0 \leq j < m$,} &
    \alpha_m &= \frac{h_M(m) - q_M(m)}{\binom{n+m}{n}} && \text{and,} \\
    \alpha_{-i} &= \frac{\nabla^{i-1} q_M(m)}{\binom{n+m+1-i}{m}} -
    \frac{\nabla^{i} q_M(m)}{\binom{n+m-i}{m}} && \text{for $1 \leq i \leq n$.}
  \end{xalignat*}
  Since each of the extreme rays corresponds to a cyclic $S$-module with a
  linear resolution and well-known Betti numbers (see Theorem~4.1.15 in
  \cite{BH}), the Absorption Identity gives
  \begin{align*}
    \beta_{i,i+j}(M) &\leq \alpha_j \cdot \beta_{i,i+j}\left( \frac{S}{\langle
        x_0, x_1, \dotsc, x_n \rangle^{j+1}} \right) \\
    &= \left( \frac{h_M(j)}{\binom{n+j}{n}} -
      \frac{h_M(j+1)}{\binom{n+j+1}{n}} \right) \cdot \left( \frac{i}{i+j}
    \right) \binom{n+j+1}{n+1} \binom{n+1}{i} \\
    &= \Bigl( (n+j+1) h_M(j) - (j+1) h_M(j+1) \Bigr) \left( \frac{1}{i+j}
    \right) \binom{n}{n-i} \\
    &= \frac{1}{i+j} \binom{n}{n-i} \bigl( T[h_M] \bigr) (j)
  \end{align*} 
  for $0 \leq j < m$, $1 \leq i \leq n+1$.  Since we have $\nabla^n q_M(m) =
  0$, the Absorption Identity and the Addition Formula give
  \begin{align*}
    \beta_{i,i+m}(M) &\leq \alpha_{m} \cdot \beta_{i,i+m}\left(
      \frac{S}{\langle x_0, x_1, \dotsc, x_n \rangle^{m+1}} \right) +
    \sum_{k=1}^{n} \alpha_{-k} \cdot \beta_{i,i+m}\left( \frac{S}{\langle
        x_0, x_1, \dotsc, x_{n-k} \rangle^{m+1}} \right) \\
    &= \left( \frac{h_M(m) - q_M(m)}{\binom{n+m}{n}} \right) \cdot \left(
      \frac{i}{i+m} \right) \binom{n+m+1}{n+1} \binom{n+1}{i} \\
    &\relphantom{wii} + \sum_{k=1}^{n} \left( \frac{\nabla^{k-1}
        q_M(m)}{\binom{n+m+1-k}{m}} - \frac{\nabla^{k}
        q_M(m)}{\binom{n+m-k}{m}} \right) \cdot \left( \frac{i}{i+m} \right)
    \binom{n+1-k +m}{n+1-k} \binom{n+1-k}{i} \\
    &= \frac{(n+m+1) h_M(m)}{i+m} \binom{n}{i-1} - \frac{(n+m+1) q_M(m)}{i+m}
    \binom{n}{i-1} \\
    &\relphantom{wii} + \sum_{k=1}^{n} \frac{(n+1-k) \nabla^{k-1} q_M(m)}{i+m}
    \binom{n-k}{i-1} - \sum_{k=1}^{n} \frac{(n+m+1-k) \nabla^{k} q_M(m) }{i+m}
    \binom{n-k}{i-1} \\
    &= \frac{n+m+1}{i+m} \binom{n}{i-1}  h_M(m) \\
    &\relphantom{wii} + \sum_{k=0}^{n} \frac{\nabla^{k} q_M(m) }{i+m} \Biggl(
      (n-k) \binom{n-k-1}{i-1} - (n+m+1-k) \binom{n-k}{i-1} \Biggr) \\
    &= \frac{n+m+1}{i+m} \binom{n}{i-1} h_M(m) - \sum_{k=0}^{n} \nabla^{k}
    q_M(m) \binom{n-k}{i-1}
  \end{align*}
  for $1 \leq i \leq n+1$.  Combining the binomial identity
  $\displaystyle\sum\limits_{k=0}^r \binom{r-k}{\ell} \binom{k}{i} =
  \binom{r+1}{\ell+i+1}$ with the higher-order difference formula yields
  \begin{align*}
    \sum_{k=0}^{n} \nabla^{k} q_M(m) \binom{n-k}{i-1} &= \sum_{k=0}^{n}
    \sum_{\ell=0}^{k} (-1)^\ell \binom{k}{\ell} \binom{n-k}{i-1} q_M(m-\ell)
    \\
    &= \sum_{\ell=0}^{n} (-1)^\ell \binom{n+1}{\ell+i} q_M(m-\ell) \, .
  \end{align*}
  Since $\nabla^{n+1}q_M(s) = 0$ and $q_M(j) = h_M(j)$ for all $j > m$, we
  obtain
  \begin{align*}
    \sum_{k=0}^{n} \nabla^{k} q_M(m) \binom{n-k}{i-1} &=
    \sum_{\ell=-i}^{n+1-i} (-1)^\ell \binom{n+1}{\ell+i} q_M(m-\ell) -
    \sum_{\ell=-i}^{-1} (-1)^\ell \binom{n+1}{\ell+i} q_M(m-\ell) \\
    &= \nabla^{n+1} q_M(m+i) - \sum_{\ell=1}^{i} (-1)^\ell
    \binom{n+1}{i-\ell} q_M(m+\ell) \\
    &= - \sum_{\ell=1}^{i} (-1)^\ell \binom{n+1}{i-\ell} h_M(m+\ell)
  \end{align*}
  which establishes the second family of inequalities.  Because the
  inequalities are equalities for an appropriate direct sum of the modules
  appearing in the list \eqref{eq:cyclic}, we conclude that the bound is sharp
  for some positive multiple of the Hilbert function $h_M$.
\end{proof}

We end by illustrating the final proposition in an example.

\begin{example}
  Let $n = 3$ and let $M$ be an $S$-module generated in degree zero and
  satisfying $h_M(j) = 3 j+1$ for all $j \in \NN$.  If the
  Castelnuovo--Mumford regularity of $M$ is bounded by $1$ or $2$
  respectively, then Proposition~\ref{pro:BettiBounds} produces the following
  entrywise bounds on the Betti tables:
  \begin{xalignat*}{2}
    \renewcommand{\arraystretch}{1.2}
    \renewcommand{\arraycolsep}{5pt} 
    &\begin{array}{c|ccccc}
      & 0 & 1 & 2 & 3 & 4 \\ \hline
      0 & 1 & . & . & . & . \\
      1 & . & 3 & 2 & . & . \\
    \end{array}
    &
    \renewcommand{\arraystretch}{1.2}
    \renewcommand{\arraycolsep}{5pt} 
    \begin{array}{c|ccccc}
      & 0 & 1 & 2 & 3 & 4 \\ \hline
      0 & 1 & . & . & . & . \\
      1 & . & 3 & 6 & \tfrac{9}{2} & \tfrac{6}{5}  \\
      2 & . & 4 & \tfrac{9}{2} & \tfrac{6}{5} & .  \\
    \end{array} 
  \end{xalignat*}
  
  \vspace{-1.2em} \hfill $\diamond$
\end{example}

\begin{bibdiv}
\begin{biblist}

\bib{BNV}{article}{
  author={Bertone, C.},
  author={Nguyen, D. H.},
  author={Vorwerk, K.},
  title={The cones of Hilbert functions of squarefree modules},
  journal={Matematiche (Catania)},
  volume={67},
  date={2012},
  number={1},
  pages={161--182},
}

\bib{BFS}{article}{
   author={Bj{\"o}rner, A.},
   author={Frankl, P.},
   author={Stanley, R.},
   title={\href{http://dx.doi.org/10.1007/BF02579197}%
     {The number of faces of balanced Cohen-Macaulay complexes and a generalized
       Macaulay theorem}},
   journal={Combinatorica},
   volume={7},
   date={1987},
   number={1},
   pages={23--34}
}

\bib{BS}{article}{
  label={BS},
  author={Boij, M.},
  author={S{\"o}derberg, J.},
  title={\href{http://dx.doi.org/10.2140/ant.2012.6.437}%
    {Betti numbers of graded modules and the multiplicity conjecture in
    the non-Cohen-Macaulay case}},
  journal={Algebra Number Theory},
  volume={6},
  date={2012},
  number={3},
  pages={437--454},
}

\bib{Brenti}{article}{
  label={Bre},
  author={Brenti, F.},
  title={\href{http://dx.doi.org/10.1023/A:1008656320759}%
      {Hilbert polynomials in combinatorics}},
  journal={J. Algebraic Combin.},
  volume={7},
  date={1998},
  number={2},
  pages={127--156},
  issn={0925-9899},
}

\bib{BM}{article}{
  author={Brinkmann, D.},
  author={Merz, M.},
  title={The cone of Hilbert functions in the non-standard graded case},
  journal={Matematiche (Catania)},
  volume={67},
  date={2012},
  number={1},
  pages={183--196},
}

\bib{BH}{book}{
  author={Bruns, W.},
  author={Herzog, J.},
  title={Cohen-Macaulay rings},
  series={Cambridge Studies in Advanced Mathematics},
  volume={39},
  publisher={Cambridge University Press},
  place={Cambridge},
  date={1993},
  pages={xii+403},
}

\bib{Eisenbud}{book}{
  label={Ei1},
  author={Eisenbud, D.},
  title={Commutative algebra with a View Toward Algebraic Geometry},
  series={Graduate Texts in Mathematics},
  volume={150},
  publisher={Springer-Verlag},
  place={New York},
  date={1995},
}

\bib{EisenbudSyz}{book}{
  label={Ei2},
  author={Eisenbud, D.},
  title={The geometry of syzygies},
  series={Graduate Texts in Mathematics},
  volume={229},
  publisher={Springer-Verlag},
  place={New York},
  date={2005},
  pages={xvi+243},
}

\bib{EG}{article}{
  author={Eisenbud, D.},
  author={Goto, S.},
  title={\href{http://dx.doi.org/10.1016/0021-8693(84)90092-9}%
    {Linear free resolutions and minimal multiplicity}},
  journal={J. Algebra},
  volume={88},
  date={1984},
  number={1},
  pages={89--133},
}

\bib{GHP}{article}{
  author={Gasharov, V.},
  author={Horwitz, N.},
  author={Peeva, I.},
  title={\href{http://dx.doi.org/10.1307/mmj/1220879413}%
    {Hilbert functions over toric rings}},
  journal={Michigan Math. J.},
  volume={57},
  date={2008},
  pages={339--357},
}

\bib{TVS}{book}{
  label={Gro},
  author={Grothendieck, A.},
  title={Topological vector spaces},
  translator={Orlando Chaljub},
  series={Notes on Mathematics and its Applications},
  publisher={Gordon and Breach Science Publishers},
  place={New York},
  date={1973},
  pages={x+245},
}

\bib{HS}{article}{
  author={Haiman, M.},
  author={Sturmfels, B.},
  title={\href{http://dx.doi.org/10.1090/S1056-3911-04-00373-X}%
    {Multigraded Hilbert schemes}},
  journal={J. Algebraic Geom.},
  volume={13},
  date={2004},
  number={4},
  pages={725--769},
  issn={1056-3911},
}

\bib{Hulett}{article}{
  label={Hul},
  author={Hulett, H.A.},
  title={\href{http://dx.doi.org/10.1080/00927879508825278}%
    {A generalization of Macaulay's theorem}},
  journal={Comm. Algebra},
  volume={23},
  date={1995},
  number={4},
  pages={1249--1263},
}

\bib{KMU}{article}{
  label={KMU},
  author={Katth\"an, L.},
  author={Moyano-Fern\'andez, J.J.},
  author={Uliczka, J.},
  title={Hilbert series of modules over positively graded polynomials rings},
  status={available at \href{http://arxiv.org/abs/1402.2588}%
    {\texttt{arXiv:1402.2588 [math.AC]}}}
}

\bib{M2}{article}{
  label={M2},
  author={Grayson, D.R.},
  author={Stillman, M.E.},
  title={Macaulay2, a software system for research in algebraic
    geometry},
  status={available at \href{http://www.math.uiuc.edu/Macaulay2/}%
    {\texttt{www.math.uiuc.edu/Macaulay2/}}}
}

\bib{Macaulay}{article}{
  label={Mac},
  author={Macaulay, F.S.},
  title={\href{http://dx.doi.org/10.1112/plms/s2-26.1.531}%
    {Some Properties of Enumeration in the Theory of Modular Systems}},
  journal={Proc. London Math. Soc.},
  volume={s2-26},
  date={1927},
  pages={531--555},
}

\bib{MS}{book}{
  author={Miller, E.},
  author={Sturmfels, B.},
  title={Combinatorial commutative algebra},
  series={Graduate Texts in Mathematics},
  volume={227},
  publisher={Springer-Verlag},
  place={New York},
  date={2005},
  pages={xiv+417},
}

\bib{Soderberg}{article}{
  label={S\"od},
  author={S\"oderberg, J.},
  title={Graded Betti number and $h$-vectors of level modules},
  date={2006-12-04},
  status={available at \href{http://arxiv.org/abs/math/0612047}%
    {\texttt{arXiv:math/0612047v2 [math.AC]}}}
}

\bib{Stanley}{book}{
  label={Sta},
  author={Stanley, R.P.},
  title={Enumerative combinatorics. Vol. 1},
  series={Cambridge Studies in Advanced Mathematics},
  volume={49},
  publisher={Cambridge University Press},
  place={Cambridge},
  date={1997},
}

\end{biblist}
\end{bibdiv}

\raggedright

\end{document}